\documentclass[12pt]{amsart}
\setlength{\textheight}{21cm}
\usepackage{graphicx,amsmath,amsfonts,amsthm,amssymb,verbatim,stmaryrd,fullpage}
\newtheorem{theorem}{Theorem}
\newtheorem{lemma}[theorem]{Lemma}
\newtheorem{prop}[theorem]{Proposition}

\newtheorem{cor}[theorem]{Corollary}

\theoremstyle{remark}

\newcommand{\R}{\mathbb R}
\newcommand{\C}{\mathbb C}
\newcommand{\Z}{\mathbb Z}
\newcommand{\Q}{\mathbb Q}
\newcommand{\F}{\mathbb F}
\newcommand{\A}{\mathbb A}

\newcommand{\p}{\mathfrak p}

\newcommand{\VV}{\mathcal V}

\newcommand{\GG}{\mathcal G}
\newcommand{\Hh}{\mathcal H}
\newcommand{\PP}{\mathcal P}
\newcommand{\TT}{\mathcal T}

\newcommand{\Sym}{\text{Sym}}

\newcommand{ \Galg}{ \F_p \llbracket G \rrbracket }

\begin{document}

\title{Bounds for Multiplicities of Automorphic Forms of Cohomological Type on $GL_2$}

\author{Simon Marshall}
\address{The Institute for Advanced Study\\
Einstein Drive\\
Princeton\\
NJ 08540, USA}
\email{slm@math.princeton.edu}

\begin{abstract}
We prove a power saving for the dimension of the space of cohomological automorphic forms of fixed level and growing weight on $GL_2$ over any number field which is not totally real.  Our proof involves the theory of $p$-adically completed cohomology developed by Calegari and Emerton, and a bound for the growth of coinvariants in certain finitely generated non-commutative Iwasawa modules.
\end{abstract}

\maketitle

\section{Introduction}

Let $F$ be a number field of degree $n$ which is not totally real, with $r_1$ real places and $r_2$ complex places, and with ring of Adeles $\A$ and finite Adeles $\A_f$.  Let $F_\infty = F \otimes_\Q \R$, so that $GL_2(F_\infty) = GL_2(\R)^{r_1} \times GL_2(\C)^{r_2}$, and let $Z_\infty$ be the centre of $GL_2(F_\infty)$.  Let $K_f$ be a compact open subgroup of the finite Adele group $GL_2(\A_f)$, and define $X = GL_2(F) \backslash GL_2(\A) / K_f Z_\infty$.  If $ {\bf d} = (d_1, \ldots, d_{r_1 + r_2})$ is an $(r_1 + r_2)$ - tuple of positive even integers, we shall let $S_{\bf d}(K_f)$ denote the space of cusp forms on $X$ which are of cohomological type with weight ${\bf d}$, and define $\Delta({\bf d})$ to be

\begin{equation*}
\Delta({\bf d}) = \prod_{i \le r_1 } d_i \times \prod_{i > r_1 } d_i^2.
\end{equation*}
In this paper, we shall investigate the dimension of $S_{\bf d}(K_f)$ as ${\bf d}$ varies with $K_f$ held fixed.  When $F$ is totally real, Shimizu \cite{Sh} has proven that

\begin{equation*}
\dim S_{\bf d}(K_f) \sim C \Delta( {\bf d} )
\end{equation*}
for some constant $C$ independent of ${\bf d}$, while if $F$ is not totally real it may be proven easily using the trace formula that

\begin{equation}
\label{littleo}
\dim S_{\bf d}(K_f) = o( \Delta( {\bf d} ) ).
\end{equation}
The puropse of this paper is to strengthen (\ref{littleo}) by a power in the case where some entries of ${\bf d}$ are held fixed while the rest grow uniformly.  To be precise, if $I$ is a subset of $[ 1, \ldots, r_1 + r_2]$ and ${\bf n} = ( n_i | i \in I )$ is an $|I|$-tuple of positive even integers, we define $\mathcal{D}({\bf n})$ to be the set of weights ${\bf d}$ such that $d_i = n_i$ for $i \in I$.  We then prove:

\begin{theorem}
\label{main}
Assume that $F$ is not totally real.  Then there is a $\delta > 0$ depending only on $F$ such that for any fixed $K_f$, $I$ and ${\bf n}$, we have

\begin{equation}
\label{weight}
\dim S_{\bf d}(K_f) \ll (\min_{i \, \notin I} d_i)^{-\delta} \Delta({\bf d})
\end{equation}
for all ${\bf d} \in \mathcal{D}({\bf n})$.

\end{theorem}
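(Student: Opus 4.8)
The plan is to realize $S_{\bf d}(K_f)$ inside the cohomology of an arithmetic manifold with coefficients in the local system $V_{\bf d}$ attached to the weight, and then to study the entire family $\{V_{\bf d}\}_{{\bf d}\in\mathcal{D}({\bf n})}$ simultaneously by $p$-adic interpolation. Fix a rational prime $p$, chosen by a Chebotarev argument so that it splits completely in $F$; in particular the complex places of $F$ (which exist, as $F$ is not totally real) give rise to genuine products of local factors at $p$. Let $G=\mathrm{Res}_{F/\Q}GL_2$ and let $Y(K)$ be the associated locally symmetric spaces. Since shrinking the level can only enlarge $S_{\bf d}$, it suffices to prove the bound for a smaller level $K^p K_{p,0}\subseteq K_f$, where $K^p$ is the prime-to-$p$ part of $K_f$ and $K_{p,0}$ is a fixed pro-$p$ compact open subgroup of $G(\Q_p)$. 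Then $\dim S_{\bf d}(K^pK_{p,0})\le\dim_{\C}H^*(Y(K^pK_{p,0}),V_{\bf d}\otimes\C)$, and after fixing a lattice this is at most $\dim_{\F}H^*(Y(K^pK_{p,0}),V_{\bf d}\otimes\F_p)$ by the universal coefficient theorem, so it is enough to bound the mod-$p$ cohomology.

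Next I would bring in the $p$-adically completed (co)homology of Calegari--Emerton. The completed homology modules $\widetilde{H}_j=\widetilde{H}_j(K^p)$ are finitely generated over the non-commutative Iwasawa algebra $\Lambda=\Z_p\llbracket\GG\rrbracket$ with $\GG=K_{p,0}$ a compact $p$-adic Lie group of dimension $D=\dim_{\Q_p}G(\Q_p)$ (so $D$ depends only on $F$), and --- this is the crucial structural input --- the defect of $G$ over $F$ is $l_0=r_2\ge 1$, so by Calegari--Emerton the $\widetilde{H}_j$ have codimension $\ge l_0$ over $\Lambda$; in particular they are torsion $\Lambda$-modules. There is moreover a spectral sequence of Calegari--Emerton of the form $E^2_{i,j}=\mathrm{Tor}^{\Lambda}_i(\widetilde{H}_j,V_{\bf d})\Rightarrow H_{j-i}(Y(K^pK_{p,0}),V_{\bf d})$, and $\dim H_*=\dim H^*$. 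Reducing modulo $p$ this gives
\[
\dim_{\F}H^*(Y(K^pK_{p,0}),V_{\bf d}\otimes\F_p)\ \le\ \sum_{i,j}\dim_{\F}\mathrm{Tor}^{\Lambda\otimes\F_p}_i(\widetilde{H}_j\otimes\F_p,\,V_{\bf d}\otimes\F_p).
\]
This reduces Theorem~\ref{main} to a statement with no automorphic content: for a finitely generated module $M$ over (the mod $p$ reduction of) the Iwasawa algebra $\Lambda$ of codimension $\ge l_0\ge 1$, one has $\sum_i\dim_{\F}\mathrm{Tor}^{\Lambda}_i(M,V_{\bf d})\ll(\min_{i\notin I}d_i)^{-\delta}\Delta({\bf d})$ uniformly for ${\bf d}\in\mathcal{D}({\bf n})$, with $\delta$ depending only on $D$. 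Note that since we use only the lower bound $\mathrm{codim}\ge l_0$, rather than the (partly conjectural) exact codimension and vanishing statements, we will obtain only a power saving and not the expected true order of growth.

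The heart of the argument --- and the step I expect to be the main obstacle --- is this Iwasawa-module estimate. The point is that the $V_{\bf d}$ are very far from arbitrary $\Lambda$-modules: the family $\bigoplus_{\bf d}V_{\bf d}$ fills out a large, full-dimensional piece of the coordinate ring of the Zariski closure of the image of $\GG$, because at each place above $p$ the relevant symmetric-power constituents (suitably paired, at the two $p$-adic places coming from a complex place of $F$, into a degree piece of the coordinate ring of $SL_2$ via $\Sym^{d}\otimes(\Sym^{d})^{*}\cong\mathrm{End}(\Sym^{d})$) exhaust a degree filtration, and $\dim_{\F}V_{\bf d}=\Delta({\bf d})$ is exactly the count of such a graded piece. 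Tensoring against a module $M$ whose support has positive codimension forces the output to behave like the corresponding graded piece of the coordinate ring of a \emph{proper} closed subvariety, which gains a positive power of the smallest varying degree parameter. To make this precise I would equip $\Lambda$ with a good filtration, pass to the associated graded ring (commutative and Noetherian) and associated graded modules, and run a Hilbert-polynomial / dimension-theory argument in the style of the theory of $d$-dimensional Iwasawa modules, bounding $\mathrm{Tor}$ through a Koszul-type resolution adapted to the ideal cutting out $V_{\bf d}$. The subtleties I anticipate: (i) codimension is not visibly preserved under reduction modulo $p$, so one must arrange $\Z_p$-flatness or else argue with $\Z/p^s$ coefficient systems and pass to the limit with care; (ii) the gain is needed only in the direction of the varying weights $d_i$ with $i\notin I$, so one must localise at the local factors at $p$ attached to those archimedean places and verify that positive codimension of $M$ over the full $\Lambda$ yields a usable codimension statement there; and (iii) extracting an explicit, uniform exponent $\delta$ --- it should come out on the order of $1/(D-1)$, hence depending only on $[F:\Q]$, as the theorem requires.
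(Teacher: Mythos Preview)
Your overall architecture matches the paper exactly: Eichler--Shimura, $p$-adic coefficients with $p$ split in $F$, the Calegari--Emerton completed homology modules $\widetilde H_j$ (torsion over $\Lambda$ because $F$ has a complex place), the spectral sequence, and reduction of the problem to an Iwasawa-module estimate of the shape $\dim_{\F_p} H_i(\GG,\, M\otimes \overline V_{\bf d}) \ll (\min d_i)^{-\delta}\Delta({\bf d})$ for $M$ a fixed finitely generated torsion $\Lambda$-module. So the reduction step is fine.

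The gap is in your proposed proof of the Iwasawa estimate. Passing to the associated graded and running a Hilbert-polynomial/dimension argument is precisely Harris's method, and it gives a power saving for the \emph{isotropic} family $\Lambda/\mathfrak m^n$, i.e.\ for coinvariants under the principal congruence subgroups $\GG_n$. But the modules $\overline V_{\bf d}$ are highly anisotropic: mod $p$ they sit inside $\F_p[\GG/\Hh_{\bf k}]$ for an Iwahori/Borel-type subgroup $\Hh_{\bf k}$ which is deep in only one unipotent direction, and a generic codimension-one support could be aligned with that direction and give no saving at all. The paper addresses exactly this point (``the groups $\TT_{\bf k}$ are not shrinking uniformly to the identity'') and its resolution uses two ingredients you do not have. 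First, the mod-$p$ reductions $\overline V_{\bf d}$ are realized concretely as submodules of $\F_p[\GG/\Hh_{\bf k}]$ and filtered by full induced modules $\F_p[\GG/\Hh_{\bf l}]$, so Shapiro reduces everything to bounding $H_i(\Hh_{\bf l},M)$. Second --- and this is the step your abstract approach cannot see --- the paper uses the \emph{extra} $SL_2(\Q_p)$-action on $\widetilde H_j$ (not just its $\Lambda$-module structure) to conjugate $\Hh_{\bf l}$ to a torus-type subgroup $\TT_{\bf l'}$; the saving for $M_{\TT_{\bf l'}}$ is then obtained by an inclusion--exclusion argument among $G$-conjugates of intermediate tori, which exploits that these conjugates together generate a larger congruence subgroup. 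The paper even remarks that the whole argument breaks in higher rank precisely because the analogue of this $\Hh\leadsto\TT$ conjugation fails; so your dimension-theory plan, which ignores this extra symmetry, is unlikely to close the gap on its own, and your anticipated exponent $\delta\sim 1/(D-1)$ (the Harris exponent) is not what the paper obtains.
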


{\bf Remark}: If the prime $p$ splits completely in $F$, we may take $\delta = (\ln 3p^2 - \ln (2p^2 + 1)) / 2 \ln p$.  This has a minimum over $p$ of $\delta \simeq 0.207 \ldots$ when $p=2$.\\

Note that theorem \ref{main} strengthens (\ref{littleo}) by a power if we restrict to ${\bf d} \in \mathcal{D}({\bf n})$ such that 

\begin{equation*}
c \le \ln d_i / \ln d_j \le C, \quad i, j \notin I
\end{equation*}
for some $C, c > 0$.  It is interesting to compare our theorem with results of Finis, Grunewald and Tirao \cite{FGT} in the case when $F$ is imaginary quadratic.  They prove the bounds

\begin{equation}
\label{trace}
d \ll \dim S_{\bf d}(K_f) \ll \frac{d^2}{\ln d}, \quad {\bf d} = (d)
\end{equation}
using base change and the trace formula respectively, where the upper bound is valid for any $K_f$ and the lower bound for any $K_f$ contained in the product of the standard maximal compact subgroups of the $p$-adic groups $GL_2(F_\p)$.  For imaginary quadratic $F$ theorem \ref{main} reads

\begin{equation*}
\dim S_{\bf d}(K_f) \ll d^{2-\delta},
\end{equation*}
and (\ref{trace}) demonstrates that the actual growth rate of $\dim S_{\bf d}(K_f)$ is a smaller power of $d$ (which is probably $d$, as the experimental data of \cite{FGT} shows).  When $F$ is contained in a solvable extension of its maximal totally real subfield $F_0$ and ${\bf d} = (d, \ldots, d)$ is parallel, Rajan \cite{Ra} has also used base change to show that

\begin{equation*}
\dim S_{\bf d}(K_f) \gg d^{ | F_0 : \Q | }
\end{equation*}
after shrinking $K_f$ if necessary  (note the distinction between this result and that of \cite{FGT}, which shows that this lower bound holds for $K_f$ maximal).\\

Automorphic forms in $\dim S_{\bf d}(K_f)$ are tempered but not in the discrete series, and bounds for the multiplicities of such forms which improve over the trivial bound by a power are quite rare.  Indeed, the best known bounds for tempered multiplicities that may be proven using purely analytic methods such as the trace formula only strengthen the trivial bound by a power of log, and to obtain more than this it seems necessary to exploit some additional number theoretic or cohomological properties of the automorphic forms.  To our knowledge, there are only two other families of automorphic forms for which bounds of this kind are known.  The first of these is $S_1(q)$, the space of classical holomorphic forms of weight 1, level $q$ and character the Legendre symbol $( \tfrac{ \cdot }{q} )$, for which bounds were proven by Duke \cite{D} and Michel and Venkatesh \cite{MV} using the restrictions placed on the Fourier coefficients of such forms by the theorem of Deligne and Serre.

The second is the collection of automorphic forms of cohomological type appearing in a `$p$-adic congruence tower', studied by Calegari and Emerton in \cite{CE1}.  Here they prove a bound for the multiplicity of cohomological forms of fixed weight and full level $Np^k$ with $k \rightarrow \infty$ on any reductive group $G$, provided the form makes a contribution to cohomology outside of the degree in which the discrete series of $G$ (if any) appears.  One of the interesting features of the proof of theorem \ref{main} is that it draws heavily on the methods used by Calegari and Emerton, in spite of the differences between the families of automorphic forms the two results deal with.

{\bf Acknowledgements}:  We would like to thank Matthew Emerton and Peter Sarnak for many helpful discussions, and Frank Calegari for helping us to understand an earlier, conditional form of our theorem in more depth, which eventually lead to the current version.  We acknowledge the generous support of the Institute for Advanced Study while this work was being conducted.

\section{Notation and outline of proof}

Let us first give a rough outline of the proof of theorem \ref{main}, which we shall expand on in the remainder of the section.  Let $Y$ be the locally symmetric space attached to $X$.  Because the forms in $S_{\bf d}$ are of cohomological type, bounding their multiplicity is equivalent to bounding the cohomology of certain complex local systems $W_{\bf d}$ on $Y$, and because $Y$ was arithmetic we are able to replace $W_{\bf d}$ with analogous systems $V_{\bf d}$ over $\Q_p$.  By choosing a lattice in $V_{\bf d}$ and reducing mod $p$ it will suffice to bound the $\F_p$ homology of a family of congruence covers of $Y$.  The family of covers that appears is sufficiently similar to the kind studied by Calegari and Emerton that we may apply their theory of $p$-adically completed homology, which converts the statement about $\F_p$ growth that we require into one about the coinvariants of noncommutative Iwasawa modules.  This is proposition \ref{induced} below, whose proof will be discussed seperately in section \ref{coinvar} and which should be regarded as the key ingredient in theorem \ref{main}.

\subsection{The Eichler-Shimura isomorphism}

$X$ may be written as a disjoint union

\begin{equation*}
X = \coprod_{i=1}^N \Gamma_i \backslash SL_2(F_\infty),
\end{equation*}
where $\Gamma_i$ are lattices of the form $SL_2(F) \cap K_i$ for compact open subgroups $K_i$ of $SL_2(\A_f)$, and by shrinking $K_f$ if necessary we may assume that $\Gamma_i$ are torsion free.  We define

\begin{eqnarray*}
Y & = & \coprod_{i=1}^N \Gamma_i \backslash SL_2(F_\infty) / K_\infty \\
& = & \coprod_{i=1}^N Y_i
\end{eqnarray*}
to be the associated locally symmetric spaces, where $K_\infty \subset SL_2(F_\infty)$ is the standard maximal compact subgroup.  We define $W_{\bf d}$ to be the representation of $SL_2(F_\infty)$ obtained by taking the tensor product of the representation $\Sym^{d_i-2}$ of $SL_2(F_{v_i})$ when $v_i$ is a real place, and the representation $\Sym^{d_i/2 -1} \otimes \overline{\Sym}^{d_i/2-1}$ of $SL_2(F_{v_i})$ when $v_i$ is complex.  We also use $W_{\bf d}$ to denote the local system on $Y$ obtained by restricting $W_{\bf d}$ to each of the groups $\Gamma_i$.\\

Let $H^i(Y, W_{\bf d})$ be the cohomology groups of the local system $W_{\bf d}$, and $H^i_c(Y, W_{\bf d})$ the subspace of classes whose restriction to some neighbourhood of the cusps is trivial.  It follows from the Eichler-Shimura isomorphism (see \cite{Hr}, section 3 or \cite{B1}, theorem 3.5 and \cite{B2}, corollary 5.5) that if $\dim W_{\bf d} > 1$,

\begin{equation}
\label{ES}
\dim H_c^{r_1+r_2} ( Y, W_{\bf d} ) = 2^{r_1} \dim S_{\bf d}(K_f).
\end{equation}
Using the duality between $H^i_c$ and $H_i$, we see that theorem \ref{main} would be implied by the following proposition:

\begin{prop}
\label{main2}
Let $Y = SL_2(F) \backslash SL_2(\A) / K_f K_\infty$ for some compact open $K_f \subset SL_2(\A_f)$, and $I$ and ${\bf n}$ be as in theorem \ref{main}.  Then there is a $\delta > 0$ depending only on $F$ such that for any fixed $K_f$, $I$ and ${\bf n}$ we have

\begin{equation}
\label{weight2}
\dim H_i( Y, W_{\bf d} ) \ll (\min_{i \, \notin I} d_i)^{-\delta} \Delta({\bf d})
\end{equation}
for all $i$ and all ${\bf d} \in \mathcal{D}({\bf n})$.
\end{prop}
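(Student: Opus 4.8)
The plan is to interpret $H_*(Y, W_{\bf d})$ in the framework of $p$-adically completed homology. First I would fix a prime $p$ (ultimately one splitting completely in $F$, to optimize the constant), and observe that $W_{\bf d}$, which is defined over $\C$ via symmetric powers of the standard and conjugate-standard representations, is the base change of a representation $V_{\bf d}$ of $SL_2(F_\infty)$ defined over $\Q_p$ (using the algebraicity of these representations and the choice of an embedding $\overline{\Q} \hookrightarrow \overline{\Q}_p$). Since the $\Gamma_i$ are arithmetic, $V_{\bf d}$ restricts to a representation of $\prod_\p SL_2(\OO_\p)$ (or a suitable open compact), and $\dim_\C H_i(Y, W_{\bf d}) = \dim_{\Q_p} H_i(Y, V_{\bf d})$. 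Next I would choose an $\OO_p$-lattice $L_{\bf d} \subset V_{\bf d}$ stable under the relevant compact subgroup; by the universal coefficient theorem and the fact that $\Delta({\bf d})$ is (up to a constant) $\dim V_{\bf d}$, it suffices to bound $\dim_{\F_p} H_i(Y, L_{\bf d}/p)$ from above by $(\min_{i \notin I} d_i)^{-\delta}\Delta({\bf d})$.

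The key reduction is to trivialize the mod-$p$ coefficient system at the cost of raising the level. Writing $L_{\bf d}/p$ as an $\F_p[SL_2(\OO_p/p^k)]$-module for $k$ large enough that the $SL_2(\OO_\p)$-action factors through level $p^k$, Shapiro's lemma identifies $H_i(Y, L_{\bf d}/p)$ with $H_i(Y(p^k), \F_p) \otimes_{\F_p[SL_2(\OO_p/p^k)]} (L_{\bf d}/p)$, where $Y(p^k)$ is the level-$p^k$ congruence cover of $Y$. The dimension of this tensor product is at most $\dim_{\F_p}(L_{\bf d}/p)$ times the number of times the trivial isotypic piece — or rather the relevant Jordan–Hölder content — appears, so what I really need is a bound on $\dim_{\F_p} H_i(Y(p^k), \F_p)$ that grows more slowly in $k$ than the size of $L_{\bf d}/p$ grows as ${\bf d}$ grows (the two being linked since larger ${\bf d}$ forces larger $k$). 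Concretely, if $\min_{i \notin I} d_i \sim p^k$ then $\Delta({\bf d})$ picks up a factor $\sim p^{k}$ or $p^{2k}$ from each growing coordinate, and I need $\dim H_i(Y(p^k), \F_p)$ to be smaller than this by a fixed power of $p^k$.

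This is exactly where the Calegari–Emerton machinery enters. Passing to the limit over $k$, the completed homology $\widetilde{H}_i = \varprojlim_k H_i(Y(p^k), \Z_p)$ — or its mod-$p$ version — is a finitely generated module over the Iwasawa algebra $\F_p\llbracket G\rrbracket$ where $G = \prod_{\p \mid p} SL_2(\OO_\p)$, a compact $p$-adic analytic group of dimension $3r_2$ (the real places contribute nothing new $p$-adically in the relevant range, since the non-totally-real hypothesis is what makes $\dim G$ strictly positive beyond what the trace-formula bound sees — this is the crucial point). The dimension $\dim_{\F_p} H_i(Y(p^k), \F_p)$ is then the dimension of the coinvariants of this Iwasawa module under the level-$p^k$ congruence subgroup, and Proposition~\ref{induced} (the non-commutative coinvariant-growth bound, proved in section~\ref{coinvar}) gives a power-saving over the trivial bound $|G : G(p^k)| \sim p^{3r_2 k}$. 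Feeding this back through Shapiro's lemma and the lattice comparison yields the factor $(\min_{i \notin I} d_i)^{-\delta}$, with $\delta$ coming from the gap between the actual coinvariant growth and the full group-index growth; tracking constants in the split case gives the explicit $\delta = (\ln 3p^2 - \ln(2p^2+1))/2\ln p$.

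The main obstacle is the control of the mod-$p$ coefficient system as both ${\bf d}$ and $k$ go to infinity simultaneously: one must ensure that the Jordan–Hölder factors of $L_{\bf d}/p$ over $\F_p[SL_2(\OO_p/p^k)]$ are accounted for correctly when pairing against completed homology, rather than naively bounding by $\dim(L_{\bf d}/p) \cdot \dim H_i(Y(p^k),\F_p)$ which would lose the power saving entirely. The resolution is that the relevant completed homology module is finitely generated over $\F_p\llbracket G \rrbracket$, so its $G(p^k)$-coinvariants twisted by *any* $\F_p[G/G(p^k)]$-module are bounded by (the generator count) times (the dimension of that module) times (the coinvariant growth of $\F_p\llbracket G\rrbracket$ itself) — and it is the last factor that Proposition~\ref{induced} shows is $o(p^{3r_2 k})$ by a power. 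Making this multilinear estimate precise, and checking that the arithmetic family of covers $Y(p^k)$ genuinely satisfies the finite-generation hypothesis of the Calegari–Emerton theory in this non-compact (cuspidal cohomology) setting, are the two points requiring care.
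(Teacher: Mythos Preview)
Your overall architecture --- pass from $W_{\bf d}$ to a $p$-adic $V_{\bf d}$, choose a lattice, reduce mod $p$, and feed the result into completed homology --- matches the paper. But the central step, where you try to extract a power saving, does not work as written, and the missing ingredient is exactly the content of Sections~\ref{cosetrep} and~\ref{modp}.

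The problem is your ``multilinear estimate'' in the last paragraph. Filtering $L_{\bf d}/p$ as an $\F_p[\GG/\GG_k]$-module gives only trivial Jordan--H\"older factors (since $\GG/\GG_k$ is a $p$-group), so you obtain $\dim H_i(\GG, M\otimes L_{\bf d}/p)\le \dim(L_{\bf d}/p)\cdot \dim H_i(\GG,M)$, which is the trivial bound. Alternatively, factoring through $\GG_k$-coinvariants first gives $\dim M_{\GG_k}\cdot\dim L\ll p^{(3t-1)k}\cdot p^{tk}$, which is far too large. Harris's theorem (the $\GG_k$ case of Proposition~\ref{induced}) saves one power of $p^k$ out of $|\GG:\GG_k|\sim p^{3tk}$, but $\dim L\sim p^{tk}$, so there is no mechanism in your sketch to transfer that saving across the tensor product. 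The claimed bound ``(generator count)\,$\times$\,(dimension of $L$)\,$\times$\,(coinvariant growth of $\Lambda$)'' is either the trivial bound or much worse, depending on how one reads it.

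What the paper actually does is more delicate. First, one does not use an arbitrary lattice: Lemma~\ref{symlattice} produces a specific $\VV_d$ such that $\VV_d/p$ embeds in $\F_p[G/H(p^k)]$, where $H(p^k)$ is the ``upper-triangular mod $p^k$'' subgroup of index $\sim p^k\sim d$ (not $p^{3k}$). Proposition~\ref{quotients} then filters any submodule of $\F_p[G/H(p^k)]$ by pieces of the form $\F_p[G/H(p^l)]$, so Shapiro's lemma reduces the problem to bounding $H_i(\Hh_{\bf l},M)$. Second --- and this is the step entirely absent from your proposal --- the action of the noncompact group $SL_2(F_{\p_i})$ on $\widetilde{H}_j$ is used to conjugate $\Hh_{\bf k}$ to the diagonal-type subgroup $\TT_{{\bf k}'}$. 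Proposition~\ref{induced} is then applied to the family $\TT_{\bf k}$; this is genuinely harder than the Harris bound because the $\TT_{\bf k}$ do not shrink to the identity, and its proof (Section~\ref{coinvar}) is the technical heart of the paper. The index $|\GG:\TT_{{\bf k}'}|$ matches $\dim L$ up to constants, so the $\eta^\kappa$ factor in Proposition~\ref{induced} becomes the desired $(\min d_i)^{-\delta}$.

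A smaller point: the hypothesis that $F$ is not totally real enters not through the dimension of $\GG$ but through property (3) in Section~\ref{compcoh} --- it forces $\widetilde{H}_j$ to be a \emph{torsion} $\Lambda_{\Q_p}$-module, because $SL_2(\C)$ has no discrete series. Without torsion, Proposition~\ref{induced} gives a main term of size $r|\GG:\TT_{\bf k}|$ and no saving.
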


{\bf Remark}: While the notational convention we have introduced for $W_{\bf d}$ allows us to state the Eichler-Shimura isomorphism (\ref{ES}) simply, it will be more convenient in the remainder of the paper to adopt the following convention: let $\{ \sigma_1, \ldots, \sigma_n \}$ be the complex embeddings of $F$, and let ${\bf d}$ be an $n$-tuple of non-negative integers indexed by the $\sigma_i$ for which $d_i = d_j$ when $\sigma_i$ and $\sigma_j$ are complex conjugates.  We then let $W_{\bf d}$ be the representation of $SL_2(F_\infty)$ obtained by forming the tensor product of the representations $\Sym^{d_i}$ of $SL_2(F_{v_i})$ when $\sigma_i$ corresponds to a real place $v_i$, and $\Sym^{d_i} \otimes \overline{\Sym}^{d_i}$ of $SL_2(F_{v_i})$ when $\sigma_i$ is either of the two embeddings corresponding to a complex place $v_i$.\\

\subsection{Completed homology}

Define the compact $p$-adic analytic subgroups $G, G(p^k), H(p^k)$ and $T(p^k)$ of $SL_2(\Z_p)$ by

\begin{eqnarray*}
& G(p^k) = \left\{ \left( \begin{array}{cc} a & b \\ c & d \end{array} \right) \Big| a-1, b, c, d-1 \in p^k \Z_p \right\}, \quad G = G(p), & \\
& H(p^k) = \left\{ \left( \begin{array}{cc} a & b \\ c & d \end{array} \right) \Big| b \in p^k \Z_p \right\} \cap G, \quad \text{and} \quad T(p^k) = \left\{ \left( \begin{array}{cc} a & b \\ c & d \end{array} \right) \Big| b, c \in p^k \Z_p \right\} \cap G. &
\end{eqnarray*}
Moreover, if $t \ge 1$ is an integer and ${\bf k} = (k_1, \ldots, k_t)$ is a $t$-tuple of positive integers, define

\begin{equation}
\label{Gdef}
\GG = \prod_{i=1}^t G_i, \quad \GG_{\bf k} = \prod_{i=1}^t G_i(p^{k_i}), \quad \Hh_{\bf k} = \prod_{i=1}^t H_i(p^{k_i}) \quad \text{and} \quad \TT_{\bf k} = \prod_{i=1}^t T_i(p^{k_i}),
\end{equation}
where $G_i \simeq G$ for all $i$, etc.  In section \ref{compcoh} we shall prove that if we choose $t = n$, there exists an algebraic representation $V_{\bf d}$ of $\GG$ over $\Q_p$ and an injection $\Gamma \rightarrow \GG$ such that if we also let $V_{\bf d}$ denote the local system on $Y$ obtained by restricting $V_{\bf d}$ to $\Gamma$, we have

\begin{equation}
\label{changescalar}
\dim_\C H_i( Y, W_{\bf d} ) = \dim_{\Q_p} H_i( Y, V_{\bf d} ).
\end{equation}

The advantage of this $p$-adic reformulation is that we may study the RHS of (\ref{changescalar}) using the completed homology modules defined by Emerton in \cite{E} and studied by Calegari and Emerton in \cite{CE1, CE2}.  These are finitely generated modules over non-commutative Iwasawa algebras $\Lambda_{\Q_p}$ and $\Lambda$, defined to be

\begin{eqnarray*}
\Lambda_{\Q_p} & = & \Lambda_{\Z_p} \otimes_{\Z_p} \Q_p, \quad \Lambda_{\Z_p} = \underset{ \substack{ \longleftarrow \\ {\bf k} } }{\lim} \: \Z_p[ \GG / \GG_{\bf k} ], \\
\Lambda & = & \underset{ \substack{ \longleftarrow \\ {\bf k} } }{\lim} \: \F_p[ \GG / \GG_{\bf k} ],
\end{eqnarray*}
where the projections are given by the trace maps $\Z_p[ \GG / \GG_{\bf k'} ] \rightarrow \Z_p[ \GG / \GG_{\bf k} ]$ for ${\bf k}' \ge {\bf k}$.  We shall describe the structure of these algebras and their associated modules in more depth in section \ref{coinvar}.  In section \ref{compcoh} we shall define finitely generated $\Lambda_{\Q_p}$ modules $\widetilde{H}_j( \VV_{\bf d})$, which may be used to calculate the RHS of (\ref{changescalar}) via a spectral sequence

\begin{equation*}
E^{i,j}_2 = H_i ( \GG_\PP, \widetilde{H}_j(\VV_{\bf d}) \otimes_{\Z_p} \Q_p ) \Longrightarrow H_{i+j}( Y, V_{\bf d}).
\end{equation*}

This spectral sequence allows us to reduce our problem to one of bounding the dimension of $H_{i, \text{con}} ( \GG, M \otimes V_{\bf d})$, where $M$ is a fixed torsion $\Lambda_{\Q_p}$ module and ${\bf d}$ varies (note that hereafter we shall always assume homology groups of $\GG$ to be computed with continuous chains, and drop the subscript `con').  We shall do this by choosing lattices $L \subset M$ and $\VV_{\bf d} \subset V_{\bf d}$, so that $L \otimes \VV_d$ is a lattice in $M \otimes V_{\bf d}$, and applying the bound

\begin{equation}
\label{reduction}
\dim_{\Q_p} H_i ( \GG, M \otimes V_{\bf d}) \le \dim_{\F_p} H_i ( \GG, M' \otimes (\VV_{\bf d}/p) ), \quad M' = L/p.
\end{equation}
In section \ref{cosetrep} we shall prove that $\VV_{\bf d}$ may be chosen to have the property that $\VV_{\bf d}/p$ is a submodule of $\F_p[\GG / \Hh_{\bf k} ]$ for any ${\bf k}$ satisfying $p^{k_i-1} > d_i$ for all $i$.  We will also provide a characterisation of the submodules of $\F_p[\GG / \Hh_{\bf k} ]$ that allows us to apply Shapiro's lemma to the RHS of (\ref{reduction}), and transfers the problem to one of bounding $H_i( \Hh_{\bf k}, M')$ for $M'$ a fixed torsion $\Lambda$ module and ${\bf k}$ varying.  Finally, by using the commensurator of $\Gamma$ to replace $\Hh_{\bf k}$ with $\TT_{\bf k}$, it suffices to bound $H_i( \TT_{\bf k}, M')$.  The trivial bound for the dimension of this space is

\begin{equation}
\label{trivbd}
\dim H_i( \TT_{\bf k}, M') \ll | \GG : \TT_{\bf k} |,
\end{equation}
and it turns out that the reductions we have made are tight in the sense that we may recover the trivial bound for $S_{\bf d}(K_f)$ from (\ref{trivbd}).  It therefore suffices to make any power improvement in (\ref{trivbd}), and this is provided by the following proposition, which lies at the heart of our proof (we define the rank $r$ of a $\Lambda$ module in section \ref{coinvar}; it suffices here to know that $r = 0$ when $M$ is torsion).

\begin{prop}
\label{induced}
For any $t \ge 1$, let $\GG$ and $\TT_{\bf k}$ be as in (\ref{Gdef}) and let $M$ be a finitely generated $\Lambda$ module of rank $r$.  Then

\begin{eqnarray}
\label{h0}
\dim M_{\TT_k} & = & (r + O(\eta^\kappa)) | \GG : \TT_{\bf k} |, \\
\label{hi}
\dim H_i( \TT_k, M) & \ll & \eta^\kappa | \GG : \TT_{\bf k} |, \quad i \ge 1,
\end{eqnarray}
for all $t$ - tuples ${\bf k}$, where $\kappa = \min(k_i)$ and $\eta = \tfrac{2p^2 +1 }{p^2}$.
\end{prop}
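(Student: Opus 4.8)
The plan is to reduce the statement about $\TT_{\bf k}$ to a purely algebraic statement about the Iwasawa algebra $\Lambda$ by a filtration and spectral sequence argument, and then to bound the relevant Tor and coinvariant dimensions using the structure theory of finitely generated $\Lambda$ modules. First I would set up the case $t=1$, where $\GG = G$ and $\TT_{\bf k} = T(p^k)$; the general case will follow by an inductive argument on $t$, writing $\TT_{\bf k} = T_1(p^{k_1}) \times \TT_{\bf k'}$ and using a Künneth/base-change spectral sequence together with the fact that $\Lambda$ is (up to completed tensor product) a product of the single-factor algebras. For the $t=1$ case, the homology $H_i(T(p^k), M)$ is computed by $\mathrm{Tor}^{\Lambda'}_i(\F_p, M)$ where $\Lambda' = \F_p\llbracket T(p^k)\rrbracket \subset \Lambda$, but it is more convenient to keep the big group $G$ fixed: by Shapiro's lemma $H_i(T(p^k), M) = \mathrm{Tor}^{\Lambda}_i\!\big(\F_p[G/T(p^k)], M\big)$ (with $G$ acting on the coset space on the left), so the whole problem becomes one of understanding the $\Lambda$-module $\F_p[G/T(p^k)]$ — its minimal free resolution, or at least the growth of the Betti numbers $\dim \mathrm{Tor}^\Lambda_i(\F_p[G/T(p^k)], \F_p)$ — and then combining this with a finite free resolution of $M$.

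The key structural input should be the following. Since $M$ is finitely generated of rank $r$ over $\Lambda$, there is a four-term-ish exact sequence expressing $M$ up to torsion as an extension involving $\Lambda^r$, so after dévissage it suffices to treat (i) $M = \Lambda$ itself, which gives the leading term $r|\GG:\TT_{\bf k}|$ in (\ref{h0}) and vanishing higher homology since $\Lambda$ is free, hence $\mathrm{Tor}_{>0}$ against it vanishes; and (ii) $M$ a finitely generated \emph{torsion} $\Lambda$-module, where one must produce the saving $\eta^\kappa$. For a torsion module, I would use a presentation $0 \to \Lambda^a \xrightarrow{\Phi} \Lambda^a \to M \to 0$ (possible after adjusting $a$, since $\Lambda$ is Auslander-regular / a maximal order in the relevant sense and torsion modules are presented by square injective matrices), apply $-\otimes_\Lambda \F_p[G/T(p^k)]$, and reduce to estimating the cokernel and kernel of the induced map $\big(\F_p[G/\TT_{\bf k}]\big)^a \xrightarrow{\bar\Phi} \big(\F_p[G/\TT_{\bf k}]\big)^a$. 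The saving comes from the fact that $\det\Phi \in \Lambda$ is a nonzero (non-unit) element killing $M$, and one must show that multiplication by a fixed nonzero element of $\Lambda$ on $\F_p[G/\TT_{\bf k}]$ has image of codimension $\le \eta^\kappa |\GG:\TT_{\bf k}|$; this in turn I would reduce to a statement about the graded pieces of $\F_p[G/\TT_{\bf k}]$ under the augmentation/Jacobson-radical filtration of $\Lambda$, comparing with the free module.

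The main obstacle — and the real content — is the explicit combinatorial estimate that makes $\eta = (2p^2+1)/p^2$ appear: one must count, in the associated graded of the relevant filtration, the dimension of $\F_p[G/\TT_{\bf k}]$ and show it falls short of the full index $|\GG:\TT_{\bf k}|$ by a factor decaying like $\eta^{-\kappa}$ relative to the trivial bound. Concretely, $|G : T(p^k)| \sim p^{2(k-1)}\cdot(\text{const})$ counts pairs $(b \bmod p^k, c \bmod p^k)$ constrained in $G$, whereas the quantity controlling homology (roughly, the number of "generators needed" or the size of a graded piece of the quotient) grows only like $(2p^2+1)^{k}/p^{?}$, because at each level the nilpotent directions in $\mathrm{gr}\,\Lambda$ for the group $G$ contribute a fixed number of new monomials bounded by $2p^2+1$ rather than the $3p^2$-ish count one would get from all of $G$. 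I expect the cleanest route is to work with the Poincaré series of the graded ring $\mathrm{gr}\,\F_p\llbracket G\rrbracket$ (a polynomial/quotient ring on the three coordinates $b,c,a-1$, or rather their $p$-power expansions) and of the graded module $\mathrm{gr}\,\F_p[G/T(p^k)]$, extract the ratio of their dimensions in the relevant range of degrees, and verify that it is $O(\eta^\kappa)$ uniformly in $i$ and ${\bf k}$; assembling the multi-factor case is then bookkeeping with products of these Poincaré series. Throughout, the uniformity in $i$ is ensured because a torsion $\Lambda$-module of projective dimension bounded by a constant (again Auslander-regularity of $\Lambda$) has only finitely many nonvanishing $\mathrm{Tor}$'s, so the bound on each of finitely many $\mathrm{Tor}_i$ suffices.
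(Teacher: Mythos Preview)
Your d\'evissage from general rank to the torsion case, and from $H_0$ to higher $H_i$ via short exact sequences and acyclicity of $\Lambda^n$, matches the paper's treatment in Section~\ref{coinvar} and is correct. The divergence, and the gap, is in the torsion $i=0$ case, which is the entire content of the proposition.

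Two concrete problems. First, the presentation $0 \to \Lambda^a \xrightarrow{\Phi} \Lambda^a \to M \to 0$ is not available: $\Lambda$ has global dimension $3t$, not $1$, and a torsion $\Lambda$-module (e.g.\ $\F_p$ itself) generally has projective dimension $>1$. There is no square injective presentation to take a determinant of, and in any case ``$\det\Phi$'' is already delicate over a noncommutative ring. The Poincar\'e-series sketch that follows never actually isolates where the specific ratio $(2p^2+1)/(3p^2)$ would come from; the sentence about ``$2p^2+1$ new monomials rather than $3p^2$'' is not grounded in any identified filtration. Second, the reduction from $t$ to $t-1$ by K\"unneth/base-change does not work as stated: if $M$ is a torsion $\Lambda_1 \widehat{\otimes} \Lambda'$-module, the intermediate object $M_{\TT'_{\bf k}}$ (or $H_j(\TT'_{\bf k'}, M)$) need not be torsion as a $\Lambda_1$-module, so you cannot feed it back into the $t=1$ bound. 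The paper flags exactly this obstruction and spends most of Section~\ref{coinvar} on it.

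What the paper actually does for $t=1$ is group-theoretic rather than homological: it interpolates between $G(p^l)$ and $T(p^l)$ by subgroups $T(l,j)$, takes three $G$-conjugates of $T(l,j)$ whose product is $T(l-1,j-1)$ and whose pairwise intersections drop to $T(l,j-1)$, and applies inclusion--exclusion on coinvariant dimensions to get the recursion $3\dim M_{T(l,j)} \le 2\dim M_{T(l,j-1)} + \dim M_{T(l-1,j-1)}$. The constant $\eta$ is read off directly from this recursion. For the inductive step in $t$, the paper filters $M$ by the monomial ideals $I_\alpha \subset \Lambda_1$, finds a graded piece that \emph{is} torsion over $\Lambda'$, and uses this to split $M_{\TT'_{\bf k}}$ into a piece with few $\Lambda_1$-generators and a piece satisfying the $t=1$ growth hypothesis uniformly. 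None of this machinery is visible in your proposal, and your route does not supply a substitute for it.
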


{\bf Remark}: We have attempted to extend this proof to higher rank groups, but have so far been prevented from doing so by the fact that the subgroup $\Hh$ for which we can realise $\VV_{\bf d}$ as a subrepresentation of $\F_p[ \GG / \Hh]$, and the subgroups $\TT$ for which we are able to prove analogues of proposition \ref{induced}, are not conjugate to one another under the noncompact $p$-adic group containing them.

{\bf Structure of Paper}: Proposition \ref{induced} is proven in section \ref{coinvar}, and section \ref{cosetrep} contains results on the structure of $\F_p[\GG / \Hh_{\bf k}]$ including how to choose the lattice $\VV_{\bf d} \subset V_{\bf d}$ such that $\VV_{\bf d}/p \subset \F_p[\GG / \Hh_{\bf k}]$.  We apply the theory of $p$-adically completed cohomology in section \ref{compcoh}, and combine these ingredients in section \ref{modp} to conclude the proof.

\section{Coinvariants of $\Lambda$-modules}
\label{coinvar}

This section contains the proof of proposition \ref{induced}.  Once we have proven the case $i=0$ and $r=0$ the others will follow by the same arguments used by Harris \cite{H} and Calegari and Emerton \cite{CE1}, and so we restate this case as a seperate proposition:

\begin{prop}
\label{coinvarlem}
If $t \ge 1$, $\GG$ and $\TT_{\bf k}$ are as in (\ref{Gdef}) and $M$ is any torsion $\Lambda$ module, we have

\begin{equation}
\label{coinvarbd}
\dim M_{\TT_k} \ll \eta^\kappa | \GG : \TT_{\bf k} |,
\end{equation}
for all ${\bf k}$, where $\kappa = \min(k_i)$ and $\eta = \tfrac{2p^2 +1 }{p^2}$.
\end{prop}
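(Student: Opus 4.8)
The plan is to reduce the bound on $\dim M_{\TT_{\bf k}}$ to a statement about the single-variable case $t=1$ and then iterate, and to prove the $t=1$ case by a direct computation in the Iwasawa algebra. First I would observe that since $\Lambda$ is (left and right) Noetherian and $M$ is finitely generated and torsion, it suffices to prove the bound for $M$ running over a set of generators of a filtration whose graded pieces are cyclic torsion modules $\Lambda/\mathfrak{a}$; by additivity of $\dim(-)_{\TT_{\bf k}}$ over short exact sequences (up to an error controlled by $H_1$, which one handles simultaneously or absorbs into the same estimate), we are reduced to the case $M = \Lambda/\Lambda x$ for a single nonzerodivisor-like element $x$, or more precisely to bounding $\dim (\Lambda/\mathfrak{a})_{\TT_{\bf k}}$ for a nonzero left ideal $\mathfrak{a}$. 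The key point is that $(\Lambda/\mathfrak{a})_{\TT_{\bf k}} = \F_p[\GG/\TT_{\bf k}] / (\text{image of }\mathfrak{a})$, so we need a lower bound on how much of $\F_p[\GG/\TT_{\bf k}]$ is killed by a nonzero element of $\Lambda$.

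The heart of the matter is therefore the following local estimate: if $x \in \Lambda = \F_p\llbracket \GG \rrbracket$ is nonzero, then multiplication by $x$ on $\F_p[\GG/\TT_{\bf k}]$ has image of dimension $\gg (1 - O(\eta^\kappa)) |\GG : \TT_{\bf k}|$, equivalently cokernel of dimension $\ll \eta^\kappa |\GG:\TT_{\bf k}|$. For $t = 1$ this is a computation in $\F_p\llbracket G \rrbracket$ where $G = G(p)$ is a uniform pro-$p$ group: one fixes the leading term of $x$ with respect to the $\mathfrak{m}$-adic (or dimension) filtration, say $x \equiv x_0 \pmod{\mathfrak{m}^{s+1}}$ with $x_0$ a nonzero element of the graded ring $\mathrm{gr}\,\Lambda \cong \F_p[X_1,X_2,X_3]$ of degree $s$ (here $X_i$ correspond to $a-1, b, c$ say, after suitably coordinatizing $G$). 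Multiplication by $x$ on $\F_p[G/T(p^k)]$ is, up to lower-order terms, multiplication by $x_0$ on the associated graded of $\F_p[G/T(p^k)]$, and the cokernel of the latter is a truncated polynomial ring modulo a principal ideal generated by $x_0$, whose dimension one counts by lattice-point / Hilbert-series bookkeeping. The factor $\eta = (2p^2+1)/p^2 = 2 + p^{-2}$ arises here: the index $|G : T(p^k)|$ grows like $p^{3(k-1)}$ (dimension $3$), while the "boundary" slice on which $x_0$ fails to be surjective — coming from the two coordinates $b,c$ that are squeezed by $T(p^k)$ versus the one coordinate that is not — has size like $(2+p^{-2}) p^{2(k-1)}$-ish, so the ratio is $O(\eta^{k-1})$; getting the constant $\eta$ (as opposed to a cruder $3/p$ or $2/p$) is exactly where one has to be careful about which variable the leading term $x_0$ genuinely depends on, and take the worst case.

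For general $t$ one proceeds by a Fubini-type argument: $\Lambda = \F_p\llbracket \GG \rrbracket$ is the completed tensor product of the $t$ factors $\F_p\llbracket G_i \rrbracket$, and $\F_p[\GG/\TT_{\bf k}] = \bigotimes_{i=1}^t \F_p[G_i/T_i(p^{k_i})]$; writing a nonzero $x \in \Lambda$ and expanding along one factor, one finds that for at least one coordinate direction $x$ has nonzero "leading slice", applies the $t=1$ estimate in that direction tensored with the full module in the others, and picks up a single factor $\eta^{k_i - 1} \le \eta^{\kappa}$. The main obstacle I anticipate is precisely this graded/leading-term reduction: making rigorous the passage from "$x$ nonzero in $\Lambda$" to "multiplication by the leading form $x_0$ controls the cokernel on the finite quotient $\F_p[\GG/\TT_{\bf k}]$ up to an acceptable error", since the filtration on $\F_p[\GG/\TT_{\bf k}]$ induced from $\Lambda$ is not the obvious one and the lower-order terms of $x$ could in principle interact badly with the truncation. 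This is the step where one must invoke the structure theory of uniform pro-$p$ groups (Lazard) to identify $\mathrm{gr}\,\Lambda$ with a polynomial ring and to control the filtration on the quotients, and it is essentially the argument of Harris \cite{H} and Calegari--Emerton \cite{CE1} adapted to the subgroup $\TT_{\bf k}$ rather than $\GG_{\bf k}$; the novelty here is bookkeeping the anisotropy between the $b,c$-directions and the diagonal direction to extract the sharp $\eta$.
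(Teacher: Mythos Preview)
Your approach diverges from the paper's on both the base case and the inductive step, and each divergence carries a real gap.

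For $t=1$ you propose a Harris-style leading-term argument in $\mathrm{gr}\,\Lambda\simeq\F_p[X_1,X_2,X_3]$. The obstacle you flag but do not resolve is in fact fatal as stated: since $T(p^k)$ contains the full diagonal torus, the torus variable $X_1$ dies in the graded of $\F_p[G/T(p^k)]$, so whenever the leading form $x_0$ of $x$ lies in the $X_1$-direction your cokernel bound yields nothing. (Your numerics are also off --- $|G:T(p^k)|=p^{2(k-1)}$, not $p^{3(k-1)}$ --- and your heuristic produces a constant rather than a factor exponential in $k$.) The paper does something entirely different: it introduces groups $T(l,j)$ interpolating between $G(p^l)$ and $T(p^l)$, observes that two unipotent conjugates of $T(l,j)$ together with $T(l,j)$ itself generate $T(l-1,j-1)$ while pairwise intersecting down to $T(l,j-1)$, and obtains by inclusion--exclusion the recursion
\[
3\,\dim M_{T(l,j)}\;\le\;2\,\dim M_{T(l,j-1)}+\dim M_{T(l-1,j-1)}.
\]
Seeding this with the Harris input $\dim M_{G(p^l)}\ll p^{2l}$ and iterating in $j$ is what produces the exponential factor $\eta^{k}$ (and the recursion makes clear that the intended value is $\eta=(2p^2+1)/(3p^2)<1$; the $3$ is missing from the statement). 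No passage to the associated graded is needed.

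For $t>1$ your one-line ``Fubini: expand $x$ along one factor and pick up a single $\eta^{k_i}$'' skips the main difficulty, which the paper states explicitly: writing $\GG=G_1\times\GG'$, the $\Lambda_1$-module $N=M_{\TT'_{\bf k}}$ need not be torsion, so the $t=1$ bound does not apply to it directly, and a saving of merely $\eta^{k_1}$ against $\dim N$ would not beat the trivial bound for $M_{\TT_{\bf k}}$. The paper's fix is a genuine extra idea: use the monomial filtration on $\Lambda_1$ to locate an index $\alpha$ for which the graded slice $M_\alpha/M_{\alpha'}$ is $\Lambda'$-torsion, so that by the inductive hypothesis $\dim(M_\alpha/M_{\alpha'})_{\TT'_{\bf k}}\ll\eta^\kappa|\GG':\TT'_{\bf k}|$; lift a basis of this small space to generate a $\Lambda_1$-submodule $L\subset N$, and verify that the quotient $K=N/L$ then satisfies $\dim K_{G_1(p^l)}\ll p^{2l}|\GG':\TT'_{\bf k}|$ uniformly in $l$, so that Proposition~\ref{single} applies to $K$. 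Your sketch supplies no analogue of this $K/L$ splitting.
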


We begin the proof by describing the structure of the algebras $\Lambda$ and $\Lambda_{\Q_p}$ in more detail (although we will not work with $\Lambda_{\Q_p}$ in this section, it is convenient to present its structure theory together with that of $\Lambda$).  $\Lambda$ is a non-commutative Noetherian integral domain, and so its field of fractions $\mathcal{L}$ is a division ring which is flat over $\Lambda$ on both sides (and likewise for $\Lambda_{\Q_p}$ and its field of fractions $\mathcal{L}_{\Q_p}$).  If $M$ is a finitely generated $\Lambda$ (respectively $\Lambda_{\Q_p}$) module, then $M \otimes_{\Lambda} \mathcal{L}$ (resp. $M \otimes_{\Lambda_{\Q_p}} \mathcal{L}_{\Q_p}$) is a finite dimensional $\mathcal{L}$ (resp. $\mathcal{L}_{\Q_p}$) vector space, and we define the rank of $M$ to be the dimension of this vector space.  We see that rank is additive in short exact sequences by the flatness of $\mathcal{L}$ over $\Lambda$, and that $M$ has rank 0 if and only if it is torsion.

The basic result on the growth of coinvariants in a finitely generated module $M$ for $\Lambda$ or $\Lambda_{\Q_p}$ is due to Harris \cite{H}.  To state it in the case under consideration, let $\GG_n$ denote the group $\GG_{\bf k}$ with ${\bf k}$ chosen to be $(n, \ldots, n)$.  We then have

\begin{theorem}
\label{harris}
Let $M$ be a finitely generated module for either $\Lambda_{\Q_p}$ or $\Lambda$ of rank $r$.  We then have

\begin{equation}
\label{harriseqn}
\dim M_{\GG_n} = r | \GG : \GG_n | + O( p^{(3t-1)n} ).
\end{equation}

\end{theorem}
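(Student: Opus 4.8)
The plan is to prove Proposition \ref{harris} (the theorem of Harris on coinvariant growth) by reducing the $\GG_n$-coinvariants of $M$ to coinvariants over a single copy of the group and exploiting that $\GG$ is a $p$-adic analytic group of dimension $3t$. First I would recall the standard structure: $\GG = \prod_{i=1}^t G_i$ is a uniform pro-$p$ group of dimension $3t$ (each $G_i \simeq G = G(p) \subset SL_2(\Z_p)$ has dimension $3$), so $\Lambda_{\Z_p} = \Z_p\llbracket \GG \rrbracket$ is a (noncommutative) Noetherian local ring whose graded ring with respect to the augmentation filtration is a polynomial ring in $3t$ variables over $\F_p$; in particular $\Lambda$ has Krull dimension and global dimension both equal to $3t$, and the Poincar\'e series of $\F_p[\GG/\GG_n]$ shows $|\GG : \GG_n| = p^{3tn}$.

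The key step is to compute $\dim M_{\GG_n} = \dim (M \otimes_\Lambda \F_p[\GG/\GG_n])$ in terms of the Hilbert–Samuel-type data of $M$. I would argue as follows. The quotient $\F_p[\GG/\GG_n]$ is $\Lambda/\Aa_n$ where $\Aa_n$ is the kernel of $\Lambda \to \F_p[\GG/\GG_n]$; since $\GG_n$ is the $n$-th term of the lower $p$-series, $\Aa_n$ sits between powers of the augmentation ideal $\Aa$, roughly $\Aa^{cn} \subseteq \Aa_n \subseteq \Aa$. For a finitely generated graded-type module one knows $\dim_{\F_p}(M/\Aa^m M)$ grows like a polynomial in $m$ of degree equal to $\dim_\Lambda M$; for $M$ of rank $r$ the top-dimensional part contributes $r \cdot \dim_{\F_p}(\Lambda/\Aa^m \Lambda) \sim r\binom{m+3t-1}{3t}$ and the lower-dimensional (torsion) part contributes a polynomial of degree $\le 3t-1$, hence $O(m^{3t-1})$. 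Specialising $m$ to be comparable to $n$ and keeping track of the comparison between the $\Aa$-adic and $\GG_n$-adic filtrations converts this into $\dim M_{\GG_n} = r|\GG : \GG_n| + O(p^{(3t-1)n})$. Concretely: choose an exact sequence $0 \to T \to M \to \Lambda^r \to Q \to 0$ with $T, Q$ torsion, use additivity of the coinvariant dimension up to the homological correction terms, note $\dim (\Lambda^r)_{\GG_n} = r|\GG:\GG_n|$ exactly, and bound $\dim T_{\GG_n}$ and $\dim Q_{\GG_n}$ by the torsion case $\dim N_{\GG_n} = O(p^{(3t-1)n})$ for torsion $N$ (which is itself the $r=0$ instance, proved by comparing a presentation of $N$ with the polynomial growth of $\Lambda/\Aa^m$).

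The reduction to the torsion case uses that a torsion finitely generated $\Lambda$-module $N$ has $\dim_\Lambda N \le 3t - 1$, so its characteristic ideal is nonzero and $N$ is pseudo-isomorphic to a sum of cyclic modules $\Lambda/\p_j$ with $\dim \Lambda/\p_j \le 3t-1$; for each such cyclic module $\dim (\Lambda/\p_j)_{\GG_n}$ is governed by a Hilbert polynomial of degree $\le 3t-1$ in $n$, i.e. is $O(p^{(3t-1)n})$ — here one can cite Venjakob's or Harris's dimension theory for Iwasawa modules over compact $p$-adic analytic groups. The main obstacle is bookkeeping the two filtrations: the $\GG_n$-coinvariants are \emph{not} literally $M/\Aa^{cn}M$, and one must check that the discrepancy between $\Aa_n$ and a fixed power of $\Aa$ only affects lower-order terms, so that the leading term $r|\GG:\GG_n| = rp^{3tn}$ survives with an error genuinely of size $p^{(3t-1)n}$ rather than, say, $p^{3tn}/n$. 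Once that comparison is pinned down the estimates (\ref{h0}) and (\ref{hi}) of Proposition \ref{induced}, and hence Proposition \ref{coinvarlem} in the rougher form, will follow by the same spectral sequence / dimension-shifting arguments used by Harris \cite{H} and Calegari–Emerton \cite{CE1}; the sharper constant $\eta = (2p^2+1)/p^2$ in Proposition \ref{coinvarlem} is a separate, finer input coming from the explicit combinatorics of $\TT_{\bf k}$ rather than from Harris's theorem, so I would treat it afterward.
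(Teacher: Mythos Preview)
The paper does not prove Theorem~\ref{harris}: it is stated as a known result due to Harris \cite{H} and used as a black box, so there is no proof in the paper to compare your proposal against.

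As a standalone sketch your outline is in the right spirit---Harris's argument does proceed via the filtered structure of $\Lambda$ (whose associated graded is polynomial in $3t$ variables over $\F_p$, as in Theorem~\ref{gpal}) together with Hilbert--Samuel dimension theory, and the reduction of the general case to the torsion case via maps between $M$ and $\Lambda^r$ with torsion (co)kernel is standard. Two slips worth flagging: the filtration parameter $m$ must be taken comparable to $p^n$, not to $n$, since $|\GG:\GG_n|=p^{3tn}$ and the Hilbert function grows like $m^{3t}$; and the pseudo-isomorphism/cyclic-module decomposition you invoke is considerably more delicate over noncommutative Iwasawa algebras than in the classical commutative setting---Harris's actual argument stays closer to the filtration and avoids this.

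Your final paragraph, however, conflates Theorem~\ref{harris} with the paper's Propositions~\ref{induced} and~\ref{coinvarlem}. Those are not corollaries of Harris's theorem but genuinely new results, proved in Section~\ref{coinvar} by an inclusion-exclusion argument on conjugates of the interpolating subgroups $T(l,j)$ together with an induction on $t$. Harris's theorem enters only as an input: in the base case $t=1$ it supplies the growth hypothesis~(\ref{growthhyp}) of Proposition~\ref{single}. The constant $\eta=(2p^2+1)/p^2$ and the extension to the non-normal subgroups $\TT_{\bf k}$ are the paper's contribution, not Harris's.
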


The significance of the exponent $3t-1$ is that it is one less than the dimension of $\GG$, so that $| \GG : \GG_n | = c p^{3tn}$ for some $c$ and hence the error term in theorem \ref{harris} is smaller than the main term by a power.  When $M$ is torsion, (\ref{harriseqn}) becomes

\begin{equation*}
\dim M_{\GG_n} \ll | \GG : \GG_n |^{1 - 1/3t}
\end{equation*}
so that proosition \ref{coinvarlem} is an extension of theorem \ref{harris} for $\Lambda$-modules to the family of subgroups $\TT_{\bf k}$.  The key difference between these two cases is that the groups $\TT_{\bf k}$ are not shrinking uniformly to the identity.\\

To prove lemma \ref{coinvarlem}, we shall first establish a quantitative version of it in the case $t = 1$ (proposition \ref{single} below), which we then use to perform induction on $t$.  The reason a two part argument of this kind is necessary is that the cases $t = 1$ and $t > 1$ are qualitatively different.  When $t = 1$, theorem \ref{harris} immediately gives the bound $\dim M_{\GG_k} \ll p^{2k}$ by assumption that $M$ is torsion, so that we only need to show that the space of coinvariants shrinks by a factor of $p^{\delta k}$ for some $\delta > 0$ when we pass to the group $\TT_k$.  In contrast, when $t = 2$ and $k_1 = k_2 = k$ the best general bound for $M_{\GG_k}$ is $\dim M_{\GG_k} \ll p^{5k}$, so that we must prove a much more substantial saving of $p^{-(1+\delta)k}$ on passing to $\TT_k$.  We shall discuss this point in more detail after the proof of the base case.

\begin{prop}
\label{single}
Let $M$ be a $\Lambda = \Galg$ module, and suppose that for some $k > 0$, $M$ satisfies the bound

\begin{equation}
\label{growthhyp}
\dim M_{G(p^l)} \le C p^{2l}
\end{equation}
for all $l \le k$.  Then we have

\begin{equation*}
\dim M_{T(p^k)} \le C \eta^{k-2} p^{2k}.
\end{equation*}

\end{prop}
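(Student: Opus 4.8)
The plan is to argue by induction on $k$. For the base case $k=2$ one has $G(p^2)\subset T(p^2)$, so $M_{T(p^2)}$ is a quotient of $M_{G(p^2)}$ and the hypothesis at $l=2$ gives $\dim M_{T(p^2)}\le\dim M_{G(p^2)}\le Cp^4=C\eta^{0}p^4$.

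For the inductive step I would pass to the associated graded module. Since $G=G(p)$ is a uniform pro-$p$ group of dimension $3$, Lazard's theory identifies the graded ring of $\Lambda=\F_p\llbracket G\rrbracket$ for the augmentation filtration with a polynomial ring $\F_p[X,Y,Z]$, the variables being the principal symbols of $u^{+}-1$, $d-1$, $u^{-}-1$ for topological generators $u^{\pm}$, $d$ of the two unipotent subgroups and of the torus $D(p)$. Choosing a good filtration on $M$, the graded module $N:=\mathrm{gr}\,M$ is finitely generated over $\F_p[X,Y,Z]$ and, as $M$ is torsion, is a torsion module there; moreover $\dim_{\F_p}M_{H}\le\dim_{\F_p}\bigl(N/\mathrm{gr}(\mathfrak a_{H})N\bigr)$ for every closed subgroup $H$, where $\mathfrak a_{H}$ denotes the augmentation ideal. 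Using the Iwahori factorizations $G(p^l)=U^{-}(p^l)D(p^l)U^{+}(p^l)$ and $T(p^k)=U^{-}(p^k)D(p)U^{+}(p^k)$ together with the characteristic-$p$ identity $\gamma^{p^{j}}-1=(\gamma-1)^{p^{j}}$ valid for $\gamma$ in an abelian subgroup, one computes $\mathrm{gr}(\mathfrak a_{G(p^l)})=(X^{p^{l-1}},Y^{p^{l-1}},Z^{p^{l-1}})$ and $\mathrm{gr}(\mathfrak a_{T(p^k)})=(X^{p^{k-1}},Y,Z^{p^{k-1}})$. The hypothesis then says $\dim N/(X^{p^{l-1}},Y^{p^{l-1}},Z^{p^{l-1}})N\le Cp^{2l}$ for $l\le k$, and the target is a bound for $\dim N/(X^{p^{k-1}},Y,Z^{p^{k-1}})N$, equivalently for $\dim\overline N/(X^{p^{k-1}},Z^{p^{k-1}})\overline N$ where $\overline N:=N/YN$ is a finitely generated $\F_p[X,Z]$-module.

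I would estimate this last quantity by comparing the exponent pair $(p^{k-1},p^{k-1})$ with $(p^{k-2},p^{k-2})$ for $\overline N$, i.e. by running the same induction on $k$ as a step of refinement in the two unipotent directions, and controlling the Hilbert function of $\overline N$ at each stage from the hypothesis — the bounds at all levels $l<k$, and not merely at $l=k$, being what pin down the Hilbert polynomial of $\overline N$, equivalently the extent to which $Y$ fails to be a nonzerodivisor on $N$. The step I expect to be the main obstacle is the quantitative heart of this: showing that multiplying the exponents of $X$ and $Z$ by $p$ changes the $\F_p$-dimension of the relevant cokernel only by the factor $\eta$, rather than the naive $p^2$. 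This is a combinatorial count of how the Hilbert function of $\overline N$ can grow under such a refinement, and it is exactly where the precise value of $\eta$ and the full strength of the hypothesis at all levels $l\le k$ enter; feeding it back together with the base case closes the induction with the constant $C\eta^{k-2}p^{2k}$. As the paper indicates, the remaining cases $i\ge1$, general rank $r$, and then general $t$ by induction on $t$ then follow by the methods of Harris and of Calegari and Emerton.
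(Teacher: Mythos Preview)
Your approach has a fatal gap: the passage to the associated graded destroys exactly the structure that makes the proposition true, and the commutative analogue you reduce to is \emph{false}.

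Concretely, take $N=\F_p[X,Y,Z]/(Y)$. Then
\[
\dim N/(X^{p^{l-1}},Y^{p^{l-1}},Z^{p^{l-1}})N=\dim \F_p[X,Z]/(X^{p^{l-1}},Z^{p^{l-1}})=p^{2(l-1)},
\]
so your graded hypothesis holds with $C=p^{-2}$. But $\overline N=N/YN\cong\F_p[X,Z]$, and
\[
\dim \overline N/(X^{p^{k-1}},Z^{p^{k-1}})\overline N=p^{2(k-1)},
\]
whereas the target bound is $C\eta^{k-2}p^{2k}=\eta^{k-2}p^{2(k-1)}$. Since $\eta=(2p^2+1)/(3p^2)<1$, this fails for every $k>2$. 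So no Hilbert-function argument over $\F_p[X,Z]$ can produce the saving you need; the ``main obstacle'' you flag is not merely hard but impossible in the commutative setting. (There is a second, related problem: the inequality $\dim M_H\le\dim N/\mathrm{gr}(\mathfrak a_H)N$ goes the wrong way to transfer the \emph{hypothesis} from $M$ to $N$.)

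The paper's proof works precisely because it exploits the non-commutativity you have discarded. It interpolates between $G(p^l)$ and $T(p^l)$ by subgroups $T(l,j)$ obtained by enlarging the torus component, and then conjugates $T(l,j)$ by unipotent elements $N_j,\overline N_j$. The three conjugates together generate $T(l-1,j-1)$, while pairwise intersections drop to $T(l,j-1)$; an inclusion--exclusion on coinvariants then yields the recursion
\[
3\dim M_{T(l,j)}\le 2\dim M_{T(l,j-1)}+\dim M_{T(l-1,j-1)},
\]
from which the factor $\eta=(2p^2+1)/(3p^2)$ emerges directly. In your graded picture conjugation is trivial, so there is no analogue of this step---which is exactly why the commutative statement fails.
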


\begin{proof}

Because we only need to make a small saving, we are able to use an argument which essentially assumes that $T(p^k)$ fixes everything in $M_{G(p^k)}$ and derives a contradiction.  Indeed, if this is the case then all groups conjugate to $T(p^k)$ under the action of $G$ must also act trivially in $M_{G(p^k)}$, but the subgroup generated by these conjugates contains $G(p^2)$ and so we obtain the stronger bound $\dim M_{G(p^k)} \ll 1$.

In practice, we shall apply an inclusion-exclusion argument based on this idea to a family of subgroups which interpolates between $G(p^k)$ and $T(p^k)$.  For $0 \le j \le l-1$, let $T(l,j)$ be the group

\begin{equation*}
T(l,j) = \left\{ \left( \begin{array}{cc} a & \\ & a^{-1} \end{array} \right) | a \equiv 1 (p^{l-j}) \right\} + G(p^l),
\end{equation*}
so that for fixed $l$, $T(l,j)$ forms a family growing from $T(l,0) = G(p^l)$ to $T(l,l-1) = T(p^l)$.  We shall prove by induction on pairs $(l,j)$, ordered lexicographically, that

\begin{equation}
\label{indhyp}
\dim M_{T(l,j)} \le C \eta^{j-1} p^{2l}.
\end{equation}

To do this, we apply inclusion-exclusion counting to the coinvariants of $M$ under the subgroups $T(l,j)$, $T(l,j)'$ and $T(l,j)''$, where the second two groups are conjugates of the first defined by

\begin{equation*}
T(l,j)' = N_j T(l,j) N_j^{-1}, \qquad T(l,j)'' = \overline{N}_j T(l,j) \overline{N}_j^{-1},
\end{equation*}
and $N_j$ and $\overline{N}_j$ are 

\begin{equation*}
N_j = \left( \begin{array}{cc} 1 & p^{j-1} \\ & 1 \end{array} \right), \qquad \overline{N}_j = \left( \begin{array}{cc} 1 &  \\ p^{j-1} & 1 \end{array} \right).
\end{equation*}

It may be seen that $T(l,j)'$ and $T(l,j)''$ are generated by the elements

\begin{equation*}
\left( \begin{array}{cc} 1+p^{l-j} & u p^{l-1} \\ & (1+p^{l-j})^{-1} \end{array} \right), \qquad \left( \begin{array}{cc} 1+p^{l-j} & \\ u' p^{l-1} & (1+p^{l-j})^{-1} \end{array} \right)
\end{equation*}
for some $u, u' \in \Z_p^\times$, from which the following properties of these three groups follow:

\begin{eqnarray}
T(l,j) \times T(l,j)' \times T(l,j)'' & = & T(l-1, j-1), \\
\label{intersection}
T(l,j) \cap ( T(l,j)' \times T(l,j)'' ) & = & T(l,j-1),
\end{eqnarray}
and likewise for the intersection of $T(l,j)'$ or $T(l,j)''$ with the group generated by the other two.  We wish to show that $\dim M_{T(l,j)}$ decays exponentially as $j$ increases from $0$ to $l-1$.  If we suppose instead that $M_{T(l,j-1)} = M_{T(l,j)}$ for some $j \ge 2$, so that everything which is invariant under $T(l,j-1)$ is also invariant under $T(l,j)$, then by conjugation these vectors must also be invariant under $T(l,j)'$ and $T(l,j)''$, and hence by their product, which is $T(l-1, j-1)$.  This gives $\dim M_{T(l,j)} \le \dim M_{T(l-1,j-1)}$, and we know the latter dimension to be small by our inductive assumption.\\

To apply inclusion-exclusion, both spaces $M_{T(l,j)}$ and $M_{T(l,j)'}$ are quotients of $M_{T(l,j) \cap T(l,j)'}$, and their largest common quotient is equal to $M_{T(l,j) \times T(l,j)'}$.  We therefore have

\begin{eqnarray*}
\dim M_{T(l,j)} + \dim M_{T(l,j)'} & \le & \dim M_{T(l,j) \cap T(l,j)'} + \dim M_{T(l,j) \times T(l,j)'} \\
2 \dim M_{T(l,j)} & \le & \dim M_{T(l,j-1)} + \dim M_{T(l,j) \times T(l,j)'},
\end{eqnarray*}
by conjugation invariance and the intersection property (\ref{intersection}).  Applying this argument again to the pair of groups $T(l,j) \times T(l,j)'$ and $T(l,j)''$, we have

\begin{eqnarray*}
\dim M_{T(l,j)''} + \dim M_{T(l,j) \times T(l,j)'} & \le & \dim M_{ T(l,j)'' \cap  (T(l,j) \times T(l,j)') } + \dim M_{ T(l,j)'' \times T(l,j) \times T(l,j)' } \\
\dim M_{T(l,j)} + \dim M_{T(l,j) \times T(l,j)'} & \le & \dim M_{ T(l,j-1) } + \dim M_{ T(l-1,j-1)}.
\end{eqnarray*}
Combining the two inequalities gives

\begin{equation}
\label{induct}
3 \dim M_{T(l,j)} \le 2\dim M_{T(l,j-1)} + \dim M_{T(l-1,j-1)}.
\end{equation}

Now, let $l_0 \le k$ and $j_0$ be given, and suppose that our inductive hypothesis (\ref{indhyp}) holds for all $(l,j)$ with either $l < l_0$ or $l = l_0$ and $j < j_0$.  The inductive hypothesis follows trivially from the growth assumption (\ref{growthhyp}) when $j_0 = 0$ or $1$, so we may assume that $j_0 \ge 2$ and hence we are able to apply our inductive argument.  Substituting (\ref{indhyp}) into (\ref{induct}), we have

\begin{eqnarray*}
3 \dim M_{T(l_0,j_0)} & \le & 2 C \eta^{j_0-2} p^{2 l_0} + C \eta^{j_0-2} p^{2 l_0 - 2} \\
\dim M_{T(l_0,j_0)} & \le & C \eta^{j_0-2} p^{2 l_0} \left( \frac{2p^2 + 1}{ 3p^2} \right) \\
& \le & C \eta^{j_0-1} p^{2 l_0}.
\end{eqnarray*}
This establishes (\ref{indhyp}) for all $(l,j)$, from which the proposition follows by setting $l = k$ and $j = k-1$.

\end{proof}

\subsection{Proof of proposition \ref{coinvarlem}: the inductive step}

We now turn to the inductive step, and begin by describing why additional work is necessary in passing from $t = 1$ to $t > 1$.  When $t=1$, lemma \ref{single} may be thought of as a way of deforming the subgroups $G(p^k)$ to $T(p^k)$ while preserving some of the power saving of theorem \ref{harris}.  However, as was mentioned earlier, when $t>1$ and ${\bf k}$ is parallel such an approach is not possible because the initial bound for $M_{\GG_{\bf k}}$ provided by theorem \ref{harris} is larger than the trivial bound for $M_{\TT_{\bf k}}$.\\

How should we get around this?  Assume that $t > 1$, and factorise the groups and rings under consideration as

\begin{equation*}
\GG = G_1 \times \GG', \quad \TT_k = T_1(p^{k_1}) \times \TT'_k, \quad \text{and} \quad \Lambda = \Lambda_1 \otimes \Lambda'.
\end{equation*}
A natural approach to proosition \ref{coinvarlem} would be to consider $M_{\TT'_{\bf k}}$ as a $\Lambda_1$ module, and apply proposition \ref{single} to derive a bound for $M_{\TT_{\bf k}}$ from a bound for the coinvariant spaces $( M_{\TT'_{\bf k}} )_{G_1(p^l)}$ for $l \le k_1$.  Indeed, it would suffice to know that

\begin{equation}
\label{g1}
\dim M_{G_1(p^l) \times \TT'_{\bf k}} \ll p^{2l} | \GG' : \TT_{\bf k}' |
\end{equation}
uniformly in ${\bf k}$ and $l$.  However, $M_{\TT'_{\bf k}}$ need not be torsion for $\Lambda_1$ and so (\ref{g1}) will not hold in general.  We get around this difficulty by showing that the $\Lambda_1$-module $M_{\TT'_{\bf k}}$ may be written as an extension of $K$ by $L$, where $K$ satisfies (\ref{g1}) and $L$ has few generators.  $K_{\TT_{\bf k}}$ may be thought of as the part of $M_{\TT_{\bf k}}$ which may be bounded using the action of $\Lambda_1$, and $L_{\TT_{\bf k}}$ as the part which may be bounded using the action of $\Lambda'$.\\

We construct $L$ by using the action of $\Lambda_1$ to define a filtration of $M$ by $\Lambda'$ modules, one of which must be torsion.  To demonstrate this in a simple case, assume that $M_{G_1}$ is torsion as a $\Lambda'$ module.  If we also assume the inductive hypothesis that proposition \ref{coinvarlem} holds for the lower dimensional group $\GG'$, we obtain

\begin{equation*}
\dim ( M_{\TT'_{\bf k}} )_{G_1} \ll \eta^\kappa | \GG' : \TT_{\bf k}' |.
\end{equation*}
We may lift a basis for $( M_{\TT'_{\bf k}} )_{G_1}$ to a generating set for $M_{\TT'_{\bf k}}$ as a $\Lambda_1$ module, and (\ref{coinvarbd}) then follows trivially from this bound on the number of generators.  In this case, we therefore see that we may take $K = 0$, $L = M_{\TT'_{\bf k}}$.

In the general case, first assume without loss of generality that $M$ is a cyclic $\Lambda$ module.  To define the filtration we shall use, we need the following structure theorem for $\Lambda$, taken from \cite{DSMS}.

\begin{theorem}
\label{gpal}
Let $ g_1, \ldots, g_d $ be a topological generating set for $\GG$.  The completed group ring $\Lambda = \F_p \llbracket \GG \rrbracket$ is generated by $z_i = 1-g_i$, and every element of it can be uniquely expressed as a sum over multi-indices $\alpha$,

\begin{equation*}
x = \sum_\alpha \lambda_\alpha z^\alpha,
\end{equation*}
where $z^{\alpha} = \Pi_{i=1}^d z_i^{\alpha_i}$ and all $\lambda_\alpha \in \F_p$.  Moreover, all such sums are in $\Lambda$.  The filtration by degree gives $\Lambda$ the structure of a filtered ring whose associated graded ring is commutative, i.e. $z^\alpha z^\beta = z^{\alpha + \beta}$ up to terms of degree $> |\alpha| + |\beta|$.

\end{theorem}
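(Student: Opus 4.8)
The statement is quoted from \cite{DSMS}; it is part of the structure theory of completed group algebras of uniform pro-$p$ groups, and my plan is to isolate the two inputs we need from that reference and say why each holds. We begin by recording that $G = G(p)$ is a uniform pro-$p$ group of dimension $3$ (for $p$ odd; when $p = 2$ one replaces $G$ by a uniform open subgroup, which changes nothing below), so that $\GG = \prod_{i=1}^t G_i$ is uniform of dimension $d = 3t$. A uniform group has minimal number of topological generators equal to its dimension, so $g_1, \dots, g_d$ may and will be taken to be a minimal generating set; by \cite{DSMS} it is then an \emph{ordered basis}, in the sense that the ``coordinates of the first kind'' map $(\lambda_1, \dots, \lambda_d) \mapsto g_1^{\lambda_1} \cdots g_d^{\lambda_d}$ is a homeomorphism $\Z_p^d \xrightarrow{\sim} \GG$.

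Writing $z_i = 1 - g_i$, the existence and uniqueness of the expansion $x = \sum_\alpha \lambda_\alpha z^\alpha$ is the assertion that the monomials $z^\alpha$ form a topological $\F_p$-basis of $\Lambda$, and I would deduce this from the homeomorphism above by passing to the finite quotients $\F_p[\GG/\GG_n]$. This ring is generated over $\F_p$ by the images $\bar z_i = 1 - \bar g_i$, hence spanned by monomials in the $\bar z_i$; using the group commutator relations to sort such a monomial and the identity $\bar z_i^{p^{n-1}} = (1 - \bar g_i)^{p^{n-1}} = 1 - \bar g_i^{p^{n-1}} = 0$ (valid in characteristic $p$ since $g_i^{p^{n-1}} \in \GG_n$), one reduces to sorted monomials $\bar z^\alpha$ with every $\alpha_i < p^{n-1}$; there are $(p^{n-1})^d = p^{d(n-1)} = |\GG : \GG_n|$ of these, so this spanning set is forced to be a basis. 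Taking the inverse limit over $n$ gives the topological basis of $\Lambda$, and an arbitrary formal sum $\sum_\alpha \lambda_\alpha z^\alpha$ converges because $z^\alpha$ lies in the $|\alpha|$-th power of the augmentation ideal $I$ of $\Lambda$ and so tends to $0$ in the $I$-adic (equivalently profinite) topology.

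For the final assertion, equip $\Lambda$ with the $I$-adic filtration; since $I$ is topologically generated by $z_1, \dots, z_d$, the basis just obtained identifies $I^n$ with the closed span of $\{z^\alpha : |\alpha| \ge n\}$, and hence $\mathrm{gr}\,\Lambda = \bigoplus_n I^n/I^{n+1}$ with the graded $\F_p$-vector space on the symbols $\bar z^\alpha$, placed in degree $|\alpha|$. Commutativity of the product then reduces to the single estimate $z_iz_j - z_jz_i \in I^3$: granting it, swapping adjacent generators one at a time inside a monomial of degree $n$ changes it only by terms of degree $\ge n+1$, which gives $z^\alpha z^\beta \equiv z^{\alpha+\beta} \pmod{I^{|\alpha|+|\beta|+1}}$ and exhibits $\mathrm{gr}\,\Lambda$ as the polynomial ring $\F_p[\bar z_1, \dots, \bar z_d]$. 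The estimate itself is where uniformity is used: powerfulness of $\GG$ gives $[g_i, g_j] = h^p$ for some $h \in \GG$, so $1 - [g_i, g_j] = 1 - h^p = (1-h)^p \in I^p \subseteq I^3$ (using $p \ge 3$; a variant still lands in $I^{\ge 3}$ for the modified $G$ when $p = 2$), and expanding the group relation $g_ig_j = [g_i,g_j]\, g_jg_i$ inside $\Lambda$ turns this into $z_iz_j - z_jz_i \in I^3$. The one part we have not carried out, and the main obstacle if one wished to prove the theorem from scratch, is the ordered-basis statement of the first paragraph: for a general compact $p$-adic analytic group it requires the weighted-degree machinery of \cite{DSMS}, and even in the uniform case it rests on the $p$-adic Lie theory underlying the homeomorphism $\Z_p^d \cong \GG$, from which, as indicated, everything else follows by the routine counting and filtration arguments above.
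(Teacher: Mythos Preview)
The paper does not prove this theorem at all; it is simply quoted as a structural input ``taken from \cite{DSMS}'' and used as a black box. Your proposal correctly identifies this and goes further by giving a faithful and essentially correct sketch of the DSMS argument (ordered basis for a uniform pro-$p$ group, counting in the finite quotients $\F_p[\GG/\GG_n]$, and the commutator estimate $z_iz_j - z_jz_i \in I^{\ge 3}$ coming from powerfulness), so there is nothing to compare beyond noting that you have supplied more detail than the paper does.
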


When applied to $\Lambda_1$, theorem \ref{gpal} says that $\Lambda_1$ is an almost commutative polynomial algebra in three variables, equal to the $\F_p$ span of $z^\alpha = z_1^{\alpha_1} z_2^{\alpha_2} z_3^{\alpha_3}$ for $\alpha \in ( \Z_{\ge 0} )^3$.  Let us define an ordering $\succ$ on such triples $\alpha$ by requiring that $\alpha \succ \beta$ if $|\alpha| > |\beta|$, and ordering those triples $\alpha$ with a common value of $|\alpha|$ lexicographically.  We denote the successor of $\alpha$ under this ordering by $\alpha'$.  We shall write $\alpha \ge \beta$ if this inequality holds for each entry of the two indices.

Define $I_\alpha$ to be the subspace of $\Lambda_1$ spanned by all monomials $z^\beta$ with $\beta \succeq \alpha$, which is a two sided ideal by the almost commutativity of $\Lambda_1$, and if $N$ is any $\Lambda_1$ module we define $N_\alpha = I_\alpha N$.  If $\alpha \ge \beta$, multiplication by $z^{\alpha - \beta}$ induces an isomorphism $I_\beta / I_{\beta'} \simeq I_\alpha / I_{\alpha'}$, and hence a surjection

\begin{equation*}
z^{\alpha - \beta} : N_\beta / N_{\beta'} \longrightarrow N_\alpha / N_{\alpha'}.
\end{equation*}
We also note that the first quotient of the filtration $\{ N_\alpha \}$ of $N$ is equal to $N_{G_1}$.\\

We shall consider the filtration $\{ M_\alpha \}$ of $M$.  If $v \in M$ is a generator, the sucessive quotients $M_\alpha / M_{\alpha'}$ of this filtration are cyclic $\Lambda'$ modules generated by $z^\alpha v$.  Moreover, if we write $M$ as $\Lambda / I$ for some left ideal $I \subset \Lambda$ and let $\alpha$ be the last tuple in the ordering $\succ$ such that $I \subset I_\alpha \otimes \Lambda'$, we see that $M_\alpha / M_{\alpha'}$ is torsion for $\Lambda'$.  We then deduce that

\begin{equation}
\label{partbd}
\dim ( M_\alpha / M_{\alpha'})_{\TT'_k} \ll \eta^\kappa | \GG' : \TT_{\bf k}' |
\end{equation}
by applying the inductive hypothesis to $M_\alpha / M_{\alpha'}$ as a $\Lambda'$ module, and this will allow us to control the size of various filtered pieces of the $\Lambda_1$ module $N = M_{\TT'_k}$.  Because the projection $M \twoheadrightarrow N$ commutes with the action of $\Lambda_1$ it induces projections $M_\alpha \twoheadrightarrow N_\alpha$, and hence a surjection

\begin{equation*}
(M_\alpha / M_{\alpha'})_{\TT'_k} \twoheadrightarrow N_\alpha / N_{\alpha'}.
\end{equation*}
It then follows from (\ref{partbd}) that

\begin{equation}
\label{partbd2}
\dim N_\alpha / N_{\alpha'} \ll \eta^\kappa | \GG' : \TT_{\bf k}' |.
\end{equation}\\

Choose a subspace $\overline{L} \subset N_{G_1}$ for which the restriction

\begin{equation*}
z^\alpha : \overline{L} \longrightarrow N_\alpha / N_{\alpha'}
\end{equation*}
is an isomorphism, so that by (\ref{partbd2}) we have

\begin{equation*}
\dim \overline{L} \ll \eta^\kappa | \GG' : \TT_{\bf k}' |.
\end{equation*}
Choose a basis for $\overline{L}$, lift its elements to $N$, and let $L \subset N$ be the submodule they generate under the action of $\Lambda_1$.  Because $L$ has $\ll \eta^\kappa | \GG' : \TT_{\bf k}' |$ generators, we have

\begin{eqnarray*}
L_{T_1(p^k)} & \ll & \eta^\kappa |G_1 : T_1(p^k) | \times | \GG' : \TT'_{\bf k} | \\
& = & \eta^\kappa | \GG : \TT_{\bf k} |
\end{eqnarray*}
so that it suffices to bound the $T_1(p^k)$ coinvariants in $K = N/L$.  We have

\begin{equation*}
z^\alpha : L_{G_1} \twoheadrightarrow N_\alpha / N_{\alpha'}
\end{equation*}
by construction, so that $L_\alpha / L_{\alpha'} \twoheadrightarrow N_\alpha / N_{\alpha'}$.  We therefore have $L_\alpha + N_{\alpha'} = N_\alpha$, so $N_{\alpha'} + L = N_\alpha + L$.  However

\begin{equation*}
K_\alpha = (N_\alpha + L) / L \quad  \text{and} \quad K_{\alpha'} = (N_{\alpha'} + L) / L,
\end{equation*}
so that $K_\alpha / K_{\alpha'} = 0$ and hence $K_\beta / K_{\beta'} = 0$ for all $\beta \ge \alpha$.\\

Our last step is to use the triviality of $K_\beta / K_{\beta'}$ to bound $K_{G_1(p^l)}$, so that lemma \ref{single} may be applied.  Define the two sided ideals $I(p^l)$ and $I(p^l)_\beta$ of $\Lambda_1$ by

\begin{eqnarray*}
& 0 \longrightarrow I(p^l) \longrightarrow \F_p \llbracket G_1 \rrbracket \longrightarrow \F_p [ G_1 / G_1(p^l) ] \longrightarrow 0, & \\
& I(p^l)_\beta = I(p^l) + I_\beta. &
\end{eqnarray*}
$I(p^l)$ may also be described as the span of the monomials $z^\alpha$ for $\alpha \not\leq (p^l-1, p^l-1, p^l-1)$.  Define

\begin{equation*}
K(p^l) = I(p^l) K \quad \text{and} \quad K(p^l)_\beta = I(p^l)_\beta K,
\end{equation*}
so that $K_{G_1(p^l)} = K / K(p^l)$.  We have a map

\begin{equation}
\label{surject}
K_{G_1} \twoheadrightarrow K_\beta / K_{\beta'} \twoheadrightarrow K(p^l)_\beta / K(p^l)_{\beta'}
\end{equation}
for all $\beta$, so that

\begin{eqnarray}
\notag
\dim K(p^l)_\beta / K(p^l)_{\beta'} & \leq & \dim K_{G_1} \\
\notag
& \leq & \dim N_{G_1} \\
\label{remainder}
& \leq & | \GG' : \TT_{\bf k}' |.
\end{eqnarray}
The map (\ref{surject}) also implies that $K(p^l)_\beta / K(p^l)_{\beta'} = 0$ for all $\beta \ge \alpha$.  If $S_l$ is the set of indices $\beta \leq (p^l-1, p^l-1, p^l-1)$, the number of $\beta \in S_l$ with $\beta \not\leq \alpha$ is $\ll_\alpha p^{2l}$.  We may therefore apply the bound (\ref{remainder}) to obtain

\begin{eqnarray*}
\dim K_{G_1(p^l)} & \le & \sum_{\beta \in S_l} \dim K(p^l)_\beta / K(p^l)_{\beta'} \\
& \ll_\alpha & p^{2l}| \GG' : \TT_{\bf k}' |.
\end{eqnarray*}
This allows us to apply proposition \ref{single} to obtain the required bound for $\dim K_{T_1(p^k)}$, which completes the proof of proposition \ref{coinvarlem}.

\subsection{Extension to higher homological degree}

We finish this section by deducing the remaining statements of proposition \ref{induced} from proposition \ref{coinvarlem}, following Harris \cite{H} and Calegari and Emerton \cite{CE1}.  Recall that we must prove that if $M$ is a finitely generated $\Lambda$ module of rank $r$, then

\begin{eqnarray}
\label{hh0}
\dim M_{\TT_k} & = & (r + O(\eta^\kappa)) | \GG : \TT_{\bf k} |, \\
\label{hhi}
\dim H_i( \TT_k, M) & \ll & \eta^\kappa | \GG : \TT_{\bf k} |, \quad i \ge 1.
\end{eqnarray}

We begin with equation (\ref{hh0}), which we have just proven in the case $r = 0$.  In general, there is an exact sequence

\begin{equation*}
0 \longrightarrow T \longrightarrow M \longrightarrow M' \longrightarrow 0
\end{equation*}
with $T$ torsion and $M'$ torsion free of rank $r$, and by applying the rank 0 result to $T$ we deduce

\begin{equation*}
| \dim M_{\TT_k} - \dim M'_{\TT_k} | \ll \eta^\kappa | \GG : \TT_{\bf k} |,
\end{equation*}
so that we may assume $M$ is torsion free.  This implies the existence of morphisms $\Lambda^r \rightarrow M$ and $M \rightarrow \Lambda^r$ with torsion cokernels, from which (\ref{hh0}) follows from the associated long exact sequences on homology and $\dim \Lambda^r_{\TT_k} = r | \GG : \TT_{\bf k} |$.\\

Turning to $i > 0$, because $M$ is finitely generated there exists a short exact sequence

\begin{equation*}
0 \longrightarrow N \longrightarrow \Lambda^n \longrightarrow M \longrightarrow 0
\end{equation*}
of finitely generated $\Lambda$ modules for some $n \ge 0$.  Because $\Lambda^n$ is acyclic, the associated long exact sequence in homology gives

\begin{eqnarray}
\label{longex1}
0 \longrightarrow H_1(\TT_k, M) \longrightarrow N_{\TT_k} \longrightarrow \Lambda^n_{\TT_k} \longrightarrow M_{\TT_k} \longrightarrow 0, \\
\label{longex2}
H_i(\TT_k, M) \simeq H_{i-1}(\TT_k, N), \quad i \ge 2.
\end{eqnarray}
The lemma for $i=1$ now follows from (\ref{longex1}) and (\ref{hh0}), taking into account the fact that rank is additive in short exact sequences.  For higher $i$, we use induction.  Assume the result for all $i \le m$ and all finitely generated modules, in particular for $N$.  We then obtain it for $M$ in degree $m+1$ from the isomorphism (\ref{longex2}), which completes the proof.

\section{The structure of $\F_p[G / H(p^k)]$}
\label{cosetrep}

The goal of this section is to prove the following structure result, which will allow us to decompose a subrepresentation $L \subset \F_p[G / H(p^k)]$ into pieces to which Shapiro's lemma can be applied.  More precisely, we shall filter $L$ with quotients isomorphic to $\F_p[G / H(p^l)]$ for $l \le k$, in a manner analogous to the base $p$ expansion of an integer.

\begin{prop}
\label{quotients}
$\F_p[G / H(p^k)]$ has a filtration $0 = F(0) \subset \ldots \subset F(p^{k-1}) = \F_p[G / H(p^k)]$ such that for all $1 \le l \le k$ and all $0 \le a < p^{k-l}$, $F((a+1)p^{l-1}) / F(ap^{l-1}) \simeq \F_p[G / H(p^l)]$.  Moreover, $F(i)$ is the unique subrepresentation of $\F_p[G / H(p^k)]$ of dimension $i$.
\end{prop}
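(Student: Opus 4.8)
The plan is to analyze the $G = G(p)$-module $\F_p[G/H(p^k)]$ by using the fact that $G/H(p^k)$ is, as a $G$-set, the union of $G$-orbits which look like $G/H(p^l)$ for various $l$, and then to identify the submodules explicitly. First I would recall that $G$ acts transitively on $G/H(p^k)$, so $\F_p[G/H(p^k)]$ is the induced module $\Ind_{H(p^k)}^G \F_p$. The key structural input is the tower of subgroups $H(p) \supset H(p^2) \supset \cdots \supset H(p^k)$, each of index $p$ in the previous (since $H(p^{l})/H(p^{l+1})$ is a one-dimensional $\F_p$-vector space coming from the $b$-entry). Dually, this gives a tower of $G$-equivariant surjections $\F_p[G/H(p^k)] \twoheadrightarrow \F_p[G/H(p^{k-1})] \twoheadrightarrow \cdots \twoheadrightarrow \F_p[G/H(p)]$, each with kernel of dimension $|G:H(p^k)| - |G:H(p^{l})|$.

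The heart of the argument is to produce the finer filtration interpolating these surjections, with successive quotients $\F_p[G/H(p^l)]$, and the base-$p$ bookkeeping. Here I would work one step at a time: it suffices to show that the kernel $N$ of $\F_p[G/H(p^{l+1})] \twoheadrightarrow \F_p[G/H(p^l)]$ admits a filtration with $p-1$ quotients, each isomorphic to $\F_p[G/H(p^{l+1})]$ — no wait, that overcounts; rather the right statement is that $\F_p[G/H(p^{l+1})]$ itself has a filtration of length $p$ with all quotients $\cong \F_p[G/H(p^l)]$, once we observe $|G:H(p^{l+1})| = p\,|G:H(p^l)|$. This is where the $H(p^l)/H(p^{l+1}) \cong \Z/p$ action enters: $\F_p[G/H(p^{l+1})] = \F_p[G/H(p^l)] \otimes_{\F_p} \F_p[\Z/p]$ as a module over $G$ twisted by this quotient, and $\F_p[\Z/p] = \F_p[x]/(x^p - 1) = \F_p[y]/(y^p)$ with $y = x-1$ has the unique complete flag $0 \subset (y^{p-1}) \subset \cdots \subset (y) \subset \F_p[\Z/p]$ with one-dimensional quotients. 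Tensoring this flag through and iterating over $l$ from $1$ to $k$, while reading off the dimensions $\sum_{l} a_l p^{l-1}$, produces precisely the filtration $F(i)$ indexed as in the statement. I would set this up carefully so the base-$p$ expansion of $i$ records which stage of which tensor-flag one has reached.

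For the uniqueness claim — that $F(i)$ is the \emph{unique} subrepresentation of dimension $i$ — the approach is to show that $\F_p[G/H(p^k)]$ is a uniserial $G$-module, i.e.\ its submodules are totally ordered. Equivalently, by the above tensor description it suffices to show $\F_p[\Z/p]$ is uniserial over the relevant group algebra (true: $\F_p[y]/(y^p)$ is a local principal ideal ring, so its ideals are the $(y^j)$, totally ordered), and that the tensor/iterated-extension structure preserves uniseriality because at each stage the quotient group acts through a $p$-group with a unique composition series on the new factor. A clean way to package this: reduce to showing $\mathrm{soc}$ and $\mathrm{rad}$ behave correctly, or argue that every nonzero submodule contains the unique simple submodule (the image of $y^{p-1}\otimes\cdots$), and quotient-induct. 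I expect \textbf{the main obstacle} to be exactly this uniseriality: proving that the filtration exhausts all submodules, not just exhibiting one filtration. The tensor-product module $\F_p[G/H(p^l)]\otimes\F_p[\Z/p]$ is a module over $G$ — but $G$ does not act on the two factors separately, so one cannot naively conclude uniseriality of the tensor product from uniseriality of the factors; one must genuinely use that $G$ acts on the second factor through the abelian $p$-group $H(p^l)/H(p^{l+1})$, for which $\F_p[H(p^l)/H(p^{l+1})]$ is local, and invoke a lemma that an extension-closed tower built from a local (hence uniserial) "new direction" at each stage stays uniserial. Getting that lemma stated and applied correctly, with the dimension count $F(i)\leftrightarrow$ base-$p$ digits of $i$ matching up, is the delicate part.
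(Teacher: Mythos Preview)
Your construction of the filtration via transitivity of induction is sound and genuinely different from the paper's route: writing $\F_p[G/H(p^{l+1})]=\Ind_{H(p^l)}^G\F_p[H(p^l)/H(p^{l+1})]$ and using the flag in $\F_p[\Z/p]=\F_p[y]/(y^p)$ does produce, after iteration, a filtration with the required successive quotients. The paper instead builds an explicit coordinate: it identifies $G/H(p^k)$ with $p\Z_p/p^k\Z_p$ via $g\mapsto c/a$, realises $F(i)$ concretely as the reductions of integer-valued polynomials of degree $\le i-1$ (with binomial-coefficient basis), and proves $G$-invariance by relating $F(d+1)$ to the reduction of a natural lattice in $\Sym^d$. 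Your approach is cleaner for the filtration alone; the paper's buys something crucial for the second half.

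The genuine gap is in the uniqueness claim. The lemma you hope to invoke --- that a tower of extensions by a uniserial ``new direction'' stays uniserial --- is false in general. Over $\F_p[(\Z/p)^2]=\F_p[X,Y]/(X^p,Y^p)$, the module $\F_p[X,Y]/(X^2,Y^2)$ is cyclic, has simple socle, and has a length-$2$ filtration with uniserial quotients $\F_p[y]/(y^2)$, yet $\text{rad}/\text{rad}^2$ is two-dimensional, so it is not uniserial. Your alternative suggestion (simple socle plus quotient-induct) fails for the same reason: simple socle does not force uniseriality, and $M/\mathrm{soc}(M)$ is not of the form $\F_p[G/H(p^{l'})]$, so the induction has nowhere to land. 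The quotient $H(p^l)/H(p^{l+1})$ carries no $G$-action, so the tensor picture does not give you a global nilpotent operator to work with.

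What actually pins down uniqueness in the paper is a single concrete element: the lower unipotent $\overline N=\left(\begin{smallmatrix}1&0\\-p&1\end{smallmatrix}\right)$ acts on the model $p\Z_p/p^k$ by translation $z\mapsto z-p$, hence on $\F_p[p\Z_p/p^k]$ as a single Jordan block of size $p^{k-1}$. Every $\overline N$-invariant subspace is therefore one of the $F(i)$, and since any $G$-submodule is in particular $\overline N$-invariant, uniseriality follows immediately. If you want to rescue your inductive framework you would need to locate, within it, an element of $G$ acting cyclically on $G/H(p^k)$; this does not emerge from the upper-unipotent direction $H(p^l)/H(p^{l+1})$ you are using, and finding it essentially forces you back to the explicit model.
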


The filtration of $L$ which we may construct from this is described below.

\begin{cor}

Let $L \subset \F_p[G / H(p^k)]$ be a submodule of dimension $d$, and let the base $p^4$ expansion of $d$ be written

\begin{equation*}
d = \sum_{i=1}^l p^{4\alpha(i)},
\end{equation*}
where $\alpha(i)$ is a non-increasing sequence of non-negative integers.  Then there exists a filtration $0 = L_0 \subset \ldots \subset L_l = L$ of $L$ by submodules $L_i$ such that $L_i / L_{i-1} \simeq \F_p[ G / H( p^{4\alpha(i)+1} )]$.

\end{cor}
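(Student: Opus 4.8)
The plan is to deduce the corollary directly from Proposition~\ref{quotients}. The only real content is the observation that a submodule $L$ of dimension $d$ must coincide with the canonical subrepresentation $F(d)$; once this is known, one simply cuts the filtration $\{F(i)\}$ at the partial sums of the expansion $d = \sum_{i=1}^{l} p^{4\alpha(i)}$, and the factor-of-$4$ subquotients are exactly the ones Proposition~\ref{quotients} hands us.

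First I would record that, since $F(d)$ is by Proposition~\ref{quotients} the unique subrepresentation of $\F_p[G/H(p^k)]$ of dimension $d$ and $L$ has dimension $d$, we have $L = F(d)$; note in particular that $d \le \dim \F_p[G/H(p^k)] = p^{k-1}$. Now put $s_0 = 0$ and $s_i = \sum_{j=1}^{i} p^{4\alpha(j)}$ for $1 \le i \le l$, so that $0 = s_0 < s_1 < \cdots < s_l = d$, and set $L_i = F(s_i)$. Since the $F(i)$ form an increasing filtration this is a chain of submodules $0 = L_0 \subset L_1 \subset \cdots \subset L_l = L$, and it remains only to compute the successive quotients $L_i / L_{i-1}$.

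Fix $i$ and write $l_i = 4\alpha(i) + 1$, so that $p^{l_i - 1} = p^{4\alpha(i)}$. Because $\alpha$ is non-increasing, every summand $p^{4\alpha(j)}$ with $j < i$ is divisible by $p^{4\alpha(i)}$, hence so is $s_{i-1}$; writing $s_{i-1} = a\, p^{l_i-1}$ with $a \in \Z_{\ge 0}$ we get $s_i = s_{i-1} + p^{4\alpha(i)} = (a+1) p^{l_i - 1}$, and therefore
\[
L_i / L_{i-1} = F\big((a+1)p^{l_i-1}\big) \big/ F\big(a p^{l_i-1}\big) \simeq \F_p[G/H(p^{l_i})] = \F_p[G/H(p^{4\alpha(i)+1})]
\]
by Proposition~\ref{quotients}, provided the side conditions $1 \le l_i \le k$ and $0 \le a < p^{k - l_i}$ hold. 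Both follow from $d \le p^{k-1}$: since $p^{4\alpha(1)} \le d \le p^{k-1}$ we have $4\alpha(i) \le 4\alpha(1) \le k-1$, so $l_i \le k$; and $(a+1)p^{l_i-1} = s_i \le d \le p^{k-1}$ is precisely the inequality $a < p^{k-l_i}$. This produces the required filtration.

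I do not anticipate a genuine difficulty here: granted Proposition~\ref{quotients}, the corollary is bookkeeping, and the only point needing attention is that each partial sum $s_{i-1}$ is an integer multiple of $p^{4\alpha(i)}$, so that the pair $(s_{i-1}, s_i)$ has the shape $\big(a p^{l_i-1}, (a+1) p^{l_i-1}\big)$ to which the proposition applies — and this is exactly what the monotonicity of $\alpha$ guarantees. The same proof would go through verbatim with $p^4$ replaced by any power $p^m$; the specific value $m = 4$ plays no role in the present statement and is forced only by the use this corollary is put to later in the paper.
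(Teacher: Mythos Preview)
Your proof is correct and follows essentially the same approach as the paper: define the partial sums $s_i$, set $L_i = F(s_i)$, use the monotonicity of $\alpha$ to see that $p^{4\alpha(i)} \mid s_{i-1}$, and invoke Proposition~\ref{quotients}. Your write-up is in fact more careful than the paper's, since you make explicit the identification $L = F(d)$ via the uniqueness clause and verify the range conditions $1 \le l_i \le k$, $0 \le a < p^{k-l_i}$ needed to apply the proposition.
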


\begin{proof}

Let the partial sums of the expansion of $d$ be

\begin{equation*}
s(i) = \sum_{j=1}^i p^{4\alpha(j)},
\end{equation*}
and let $L_i = F(s(i))$.  We have $s(i) = s(i-1) + p^{4\alpha(i)}$ and $p^{4\alpha(i)} | s(i-1)$, so by the proposition $L_i / L_{i-1} \simeq \F_p[ G / H( p^{4\alpha(i)+1} )]$ as required.

\end{proof}

We begin the proof of proposition \ref{quotients} by defining $\phi: G \rightarrow p \Z_p$ by

\begin{equation*}
\phi: \left( \begin{array}{cc} a & b \\ c & d \end{array} \right) \mapsto \frac{c}{a}.
\end{equation*}

It can be seen that $\phi$ intertwines the right action of $G$ on itself with the action on $p\Z_p$ by fractional linear transformations given by

\begin{equation*}
g : z \mapsto \frac{dz + c}{bz + a}.
\end{equation*}
Moreover, if we let $\overline{\phi}$ denote the composition of $\phi$ with reduction modulo $p^k \Z_p$, then it may be seen that $\overline{\phi}$ factors through the right action of $H(p^k)$.  $\phi$ therefore defines an isomorphism between the actions of $G$ on $G/H(p^k)$ and $p\Z_p / p^k$, and hence the representations $\F_p[G/H(p^k)]$ and $\F_p[ p\Z_p / p^k]$.  There is an important colllection of subspaces of $\F_p[ p\Z_p / p^k]$, which we shall denote by $F(i)$ for $0 \le i \le p^{k-1}$, and which may be defined as the space of all functions on $p\Z_p$ obtained by taking the reductions modulo $p$ of polynomials $p : \Q_p \rightarrow \Q_p$ of degree at most $i-1$ which take integer values on that set (and where we set $F(0) = 0$).  It is a theorem of Lucas that such functions are in fact constant on cosets of $p^k \Z_p$, and moreover that a basis for $F(i)$ is given by the binomial coefficients $x \mapsto \binom{x/p}{t}$, $0 \le t \le i-1$, so that $\dim F(i) = i$.  The lower unipotent matrix

\begin{equation*}
\overline{N} = \left( \begin{array}{cc} 1 & 0 \\ -p & 1 \end{array} \right)
\end{equation*}
acts on $p\Z_p$ by $z \mapsto z-p$, and so acts on the function $\binom{x/p}{t}$ by

\begin{eqnarray*}
\overline{N} \binom{x/p}{t} & = & \binom{x/p+1}{t} \\
& = & \binom{x/p}{t} + \binom{x/p}{t-1}.
\end{eqnarray*}
It follows from this that $F(i)$ is the unique $i$ dimensional subspace of $\F_p[ p\Z_p / p^k]$ which is invariant under $\overline{N}$.  In fact, more is true:

\begin{lemma}
\label{symlattice}
The subspaces $F(i)$ are invariant under $G$.
\end{lemma}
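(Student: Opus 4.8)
The plan is to prove that the flag $\{F(i)\}$ is stable under the full group $G$ by exploiting the fact that $G$ acts through $\PSL_2$ on $p\Z_p$ by fractional linear transformations, that $G$ is generated (topologically, hence as a group acting on a finite-dimensional $\F_p$-space) by the lower unipotent $\overline N$, its conjugates, and the torus. Since we already know each $F(i)$ is the unique $\overline N$-stable subspace of its dimension, it suffices to check invariance under a generating set that together with $\overline N$ generates $G$; the natural choice is the upper unipotent $N = \left(\begin{smallmatrix} 1 & p \\ 0 & 1\end{smallmatrix}\right)$ and the diagonal torus $\left(\begin{smallmatrix} a & 0 \\ 0 & a^{-1}\end{smallmatrix}\right)$ with $a \in 1 + p\Z_p$, since $\overline N$, $N$ and this torus topologically generate $G = G(p)$.

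First I would compute the action of the torus. Under $\phi$, the element $\mathrm{diag}(a, a^{-1})$ acts on $z \in p\Z_p$ by $z \mapsto a^{-2} z$ (or $a^2 z$, depending on the convention — the precise scalar is irrelevant), i.e. by a dilation $z \mapsto \lambda z$ with $\lambda \in 1 + p\Z_p$. A polynomial of degree $\le i-1$ in $z$ pulls back under a linear substitution $z \mapsto \lambda z$ to another polynomial of degree $\le i-1$ in $z$, and the integer-valued condition on $p\Z_p$ is preserved because $\lambda p\Z_p = p\Z_p$; hence the torus preserves each $F(i)$. Next, the upper unipotent: $N$ acts on $p\Z_p$ by a fractional linear transformation $z \mapsto z/(pz + 1)$, which is genuinely non-affine, so I cannot argue by degree alone. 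The cleanest route is to observe that $N$ is conjugate to $\overline N$ by the Weyl element $w = \left(\begin{smallmatrix} 0 & 1 \\ -1 & 0\end{smallmatrix}\right)$, but $w \notin G$; instead I would use that $N$ is conjugate to $\overline N^{-1}$ (or to $\overline N$) by an element of $G$ — concretely, conjugation of $\overline N$ by a suitable torus element and by $\overline N$ itself generates, inside $G$, enough unipotents. Alternatively, and more robustly, note that $N = \left(\begin{smallmatrix} 1 & p \\ 0 & 1\end{smallmatrix}\right)$ and $\overline N' = \left(\begin{smallmatrix} 1 & 0 \\ -p & 1\end{smallmatrix}\right)$ together with the torus generate $SL_2(\Z_p) \cap G$, so it is enough to check $N$-invariance directly. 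For that, I would use the uniqueness of $F(i)$ as the $\overline N$-stable subspace together with the identity $w \overline N w^{-1} = N^{-1}$ after transporting the problem: the subspace $w \cdot F(i)$ is the unique $N$-stable subspace of dimension $i$; so it suffices to show $w \cdot F(i) = F(i)$, equivalently that $F(i)$ is $w$-stable, equivalently (since $w^2$ is central and $w, \overline N$ generate $\PSL_2(\Z_p)$) that $F(i)$ is stable under $SL_2(\Z_p)$.

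So the argument reduces to: the flag $\{F(i)\}$ in $\F_p[p\Z_p / p^k] \cong \F_p[\PP^1(\Z/p^k)\setminus(\text{a point})]$ — really the functions on $p\Z_p/p^k$ — is $SL_2(\Z_p)$-stable. This is exactly the statement that the "polynomials of degree $< i$" filtration on functions on the affine/projective line over $\Z/p^k$ is stable under the Borel and under $w$; the $w$-stability is the only non-formal point. I would verify it by a direct computation of $w \binom{x/p}{t}$ modulo $p$: $w$ sends the coordinate $z$ to $-1/z$ (projectively), and one checks on the span of $\binom{x/p}{0}, \ldots, \binom{x/p}{t}$ that the image again lies in this span, using the Lucas-congruence description of these basis functions and the fact that reduction mod $p$ of $z^{-1}$ on $p\Z_p$ is well-defined after clearing the pole at the cusp. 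Concretely one expands $\binom{(-1/z)/p}{t}$, clears denominators, and reduces; the degree bookkeeping closes because $w$ has order $4$ and conjugates the $\overline N$-filtration to the $N$-filtration, and we have arranged that these two filtrations coincide.

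The main obstacle is exactly this last verification of $w$-invariance (equivalently $N$-invariance): unlike the torus and $\overline N$, the Weyl element acts by a Möbius transformation with a pole, so one cannot argue purely by "polynomial degree is preserved" and must instead push through an explicit computation with the binomial-coefficient basis modulo $p$, or else package it cleanly via the uniqueness clause of the proposition. I expect the cleanest writeup to combine both: use the torus- and $\overline N$-invariance to pin down $F(i)$, then observe that $g \cdot F(i)$ is again the unique $g\overline N g^{-1}$-stable subspace of dimension $i$ for any $g \in G$, and finally use that as $g$ ranges over $G$ the conjugates $g \overline N g^{-1}$ generate a subgroup acting irreducibly enough that the only common stable subspace structure is the flag itself — forcing $g \cdot F(i) = F(i)$.
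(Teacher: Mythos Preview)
Your proposal has a genuine gap at precisely the point you yourself flag as ``the main obstacle'': you never establish $N$-invariance (equivalently, $w$-invariance) of $F(i)$. The torus and $\overline N$ cases are fine, but none of your proposed routes for $N$ closes. The uniqueness argument is circular: from the observation that $g \cdot F(i)$ is the unique $g\overline N g^{-1}$-stable subspace of dimension $i$ you cannot deduce $g \cdot F(i) = F(i)$ unless you already know that $F(i)$ is $g\overline N g^{-1}$-stable for every $g \in G$, i.e.\ stable under the normal closure of $\langle\overline N\rangle$ in $G$ --- which is essentially the $G$-invariance you are trying to prove. The appeal to ``the conjugates $g\overline N g^{-1}$ generate a subgroup acting irreducibly enough'' fails for the same reason: nothing you have written shows that any single $g \cdot F(i)$ is stable under conjugates of $\overline N$ other than $g\overline N g^{-1}$ itself. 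As for the direct computation of $N \cdot \binom{x/p}{t}$ modulo $p$, you do not carry it out; since $N$ acts by the non-affine substitution $z \mapsto z/(1+pz)$, one must control the $p$-adic valuations arising from the denominators $t!$ in the binomial basis, and this is not the formality your sketch suggests.

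The paper bypasses all of this with a short conceptual argument: it realises each $F(i)$ as the mod-$p$ reduction of an explicit $G$-stable $\Z_p$-lattice $\VV_d$ inside the symmetric power $\Sym^d$. Concretely, view $\Sym^d$ as the space of functions on $G$ of the form $g \mapsto p(a,c)$ with $p$ homogeneous of degree $d$, and take $\VV_d$ to be the sublattice of $\Z_p$-valued such functions; this is visibly $G$-stable because $G$ acts by right translation. Writing $p(a,c) = a^d\, p(1,c/a)$ and using $a \in 1 + p\Z_p$ so that $a^d \equiv 1 \pmod p$, one sees that transporting $\VV_d$ along the intertwiner $\phi$ and reducing mod $p$ lands exactly on the reductions of integer-valued polynomials of bounded degree in $z = c/a$, i.e.\ on $F(i)$. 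Invariance of $F(i)$ under $G$ is then immediate, with no case analysis on generators and no M\"obius computation.
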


\begin{proof}
For $0 \le d \le p^{k-1}-1$ let $\Sym^d$ be the standard $d$th symmetric power representation of $G$, realised on the space of functions $f : G \rightarrow \Q_p$ of the form

\begin{equation*}
f: \left( \begin{array}{cc} a & b \\ c & d \end{array} \right) \mapsto p(a,c)
\end{equation*}
for some homogeneous polynomial $p$ of degree $d$, and let $\VV_d \subset \Sym^d$ be the lattice of integral valued functions.  For $f \in \VV_d$, we have

\begin{eqnarray*}
f \left[ \left( \begin{array}{cc} a & b \\ c & d \end{array} \right) \right] & = & p(a,c)\\
& = & a^d p(1, \tfrac{c}{a})\\
& \equiv & p(1, \tfrac{c}{a}) \quad \text{mod} \: p,
\end{eqnarray*}
so that when we transfer functions in $\VV_d$ to $p\Z_p$ via $\phi$ and reduce modulo $p$ we obtain exactly the space $F(d)$.  $\VV_d$ is clearly preserved by $G$, and because $\phi$ was an intertwiner we see that $F(d)$ is also.

\end{proof}

In light of this lemma, we see that $F(i)$ are exactly the subspaces described in the filtration of proposition \ref{quotients}.  The key idea behind the main claim of proposition \ref{quotients} is a certain recursive characterisation of the subspaces $F(i)$.  If we define $F(a,l)$ to be the subspace of $\F_p[p^l \Z_p / p^k]$ obtained by reducing polynomials of degree at most $a-1$ which are integral valued on $p^l \Z_p$ modulo $p$, we then see that $F(a p^{l-1})$ is exactly the subspace of $\F_p[ p\Z_p / p^k]$ consisting of elements whose restrictions to the cosets $\F_p[ z + p^l \Z_p / p^k ]$ all lie in $F(a, l)$.  Indeed, this follows from the observation that both spaces have the same dimension, and are invariant under the lower unipotent subgroup.  We therefore have

\begin{eqnarray}
\label{quotiso}
F((a+1)p^{l-1}) / F(ap^{l-1}) & \simeq & ( F(a+1, l) / F(a, l) ) \otimes \F_p[ p\Z_p / p^l ] \\
\notag
& \simeq & \F_p[ p\Z_p / p^l ]
\end{eqnarray}
as vector spaces, and so the proposition would follow from knowing that the identification (\ref{quotiso}) commutes with the action of $G$ on both sides.  To show this, let $g \in G$ be given.  By restriction, $g$ gives a map from $z + p^l \Z_p / p^k$ to $gz + p^l \Z_p / p^k$, and when we choose an identification of both of these sets with $p \Z_p / p^{k-l+1}$ in the natural way we see that this map is equal to a fractional linear transformation $x \mapsto g' x$ for some $g' \in G$.  Therefore, using our claim that the fractional linear action of $G$ preserves $F(a, l)$, we see that the map

\begin{equation*}
\F_p[z + p^l \Z_p / p^k] \longrightarrow \F_p[gz + p^l \Z_p / p^k]
\end{equation*}
preserves $F(a, l)$ and $F(a+1,l)$, and because $F(a+1,l)/F(a,l)$ is one dimensional it acts trivially on the quotient.  It follows that the isomorphism (\ref{quotiso}) commutes with $G$, which concludes the proof.

\section{Completed homology}
\label{compcoh}

In this section, we shall construct the $p$-adic local system $V_{\bf d}$ with the property that

\begin{equation}
\label{coeffhom}
\dim_\C H_i( Y, W_{\bf d} ) = \dim_{\Q_p} H_i( Y, V_{\bf d} ).
\end{equation}
We shall then apply the theory of $p$-adically completed cohomology developed by Calegari and Emerton \cite{CE1, CE2, E} to convert the problem of bounding the RHS to one of bounding $H_i( \GG, M \otimes V_{\bf d})$, where $M$ is a fixed $\Lambda_{\Q_p}$ modules and ${\bf d}$ varies.  To begin, let $p$ be a prime which is totally split in $F$.  If $\{ \p_1, \ldots, \p_n \}$ are the primes of $F$ above $p$, and $F_{\p_i}$ is the completion of $F$ at $\p_i$, $\Gamma$ has an embedding

\begin{equation*}
\phi: \Gamma \longrightarrow SL_2(F_p) := \prod_i SL_2(F_{\p_i})
\end{equation*}
the closure of whose image is a compact open subgroup of the target $p$-adic group.  If we let $\GG$ be as in (\ref{Gdef}) with $t = n$, $SL_2(F_p)$ contains $\GG$ as a compact open subgroup, and by passing to a finite index sublattice we may assume that $\phi(\Gamma) \subset \GG$.  We shall in fact assume that $\overline{ \phi(\Gamma) } = \GG$, which is not necessary for the proof but allows us to avoid cluttering the previous sections with excessive notation.  In any case, it may always be arranged after first choosing $p$ at which $\Gamma$ has full level.\\

Recalling our convention that ${\bf d}$ was an $n$-tuple of non-negative integers, we define the representation $V_{\bf d}$ of $\GG$ to be the tensor products of the representations $\Sym^{d_i}$ of $G_i$, and denote both the restriction of $V_{\bf d}$ to $\Gamma$ under $\phi$ and the associated local system on $Y$ in the same way.  Note that this definition relies on a choice of bijection between the complex embeddings $\{ \sigma_i \}$ and $p$-adic embeddings $\{ \p_i\}$ of $F$.  The following lemma shows that (\ref{coeffhom}) holds if this choice is made in a natural way.

\begin{lemma}
\label{coeff}
Let $\overline{F}$ be the Galois closure of $F$.  There exist a complex and $p$-adic place $\sigma$ and $\p$ of $\overline{F}$, and a bijection between the set of all complex and $p$-adic places $\{ \sigma_i \}$ and $\{ \p_i\}$ of $F$, such that for all ${\bf d} \in (\Z_{\ge 0})^n$ there exists a representation $\rho$ of $\Gamma$ over $\overline{F}$ such that

\begin{itemize}

\item $\rho \otimes_{\sigma} \C \simeq W_{\bf d}$,

\item $\rho \otimes_\p \Q_p \simeq V_{\bf d}$.

\end{itemize}

\end{lemma}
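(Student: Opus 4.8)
The plan is to build the representation $\rho$ over $\overline F$ by realizing both $W_{\bf d}$ and $V_{\bf d}$ as base changes of a single representation of $\Gamma$ coming from the standard two-dimensional representation of $SL_2$. First I would observe that $\Gamma \subset SL_2(F)$, so there is a tautological representation $\mathrm{st}: \Gamma \to SL_2(\overline F)$, simply the inclusion followed by $SL_2(F) \hookrightarrow SL_2(\overline F)$. For a single complex embedding $\sigma$ of $\overline F$, the composite $\Gamma \to SL_2(\overline F) \xrightarrow{\sigma} SL_2(\C)$ is the standard representation at the archimedean place it induces on $F$; likewise for a $p$-adic embedding $\p$ of $\overline F$ we get the standard representation into $SL_2(\Q_p)$ at the induced $p$-adic place of $F$. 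The key point is that $\overline F$, being Galois over $\Q$, has the property that $\mathrm{Gal}(\overline F/\Q)$ acts transitively on both its complex embeddings and its $p$-adic embeddings (the latter because $p$ splits completely in $F$, hence in $\overline F$), so fixing one $\sigma$ and one $\p$ as above, every complex embedding of $F$ is $\sigma \circ \tau|_F$ for some $\tau \in \mathrm{Gal}(\overline F/\Q)$ and similarly every $p$-adic embedding of $F$ is $\p \circ \tau|_F$; this is exactly what lets us set up the bijection $\{\sigma_i\} \leftrightarrow \{\p_i\}$: pair $\sigma \circ \tau|_F$ with $\p \circ \tau|_F$.

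Next I would write down $\rho$ explicitly. For each $i$, choose $\tau_i \in \mathrm{Gal}(\overline F/\Q)$ realizing the $i$-th embedding pair, and let $\rho_i = \mathrm{Sym}^{d_i}$ of the twisted standard representation $\gamma \mapsto \tau_i(\mathrm{st}(\gamma))$, a representation of $\Gamma$ on an $\overline F$-vector space. Then set
\begin{equation*}
\rho = \bigotimes_{i=1}^n \rho_i,
\end{equation*}
a representation of $\Gamma$ over $\overline F$. Applying $\otimes_\sigma \C$ commutes with $\mathrm{Sym}$ and with tensor products, and $\sigma \circ \tau_i$ is by construction the $i$-th complex embedding $\sigma_i$ of $F$; for a complex place this embedding and its conjugate both occur among the $\tau_i$, so the $i$-th factor contributes $\mathrm{Sym}^{d_i} \otimes \overline{\mathrm{Sym}}^{d_i}$ of $SL_2(F_{v_i})$ exactly as in the paper's convention for $W_{\bf d}$, and for a real place it contributes $\mathrm{Sym}^{d_i}$. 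Hence $\rho \otimes_\sigma \C \simeq W_{\bf d}$. The identical computation with $\otimes_\p \Q_p$ gives $\rho \otimes_\p \Q_p \simeq V_{\bf d}$, since $\p \circ \tau_i$ is the $i$-th $p$-adic embedding $\p_i$ and $V_{\bf d}$ was defined as $\bigotimes_i \mathrm{Sym}^{d_i}$ of $G_i \subset SL_2(F_{\p_i})$ with the matching bijection. The constraint $d_i = d_j$ for conjugate embeddings is automatically respected because a complex place of $F$ contributes a conjugate pair of embeddings of $\overline F$ which are identified with conjugate pairs under the Galois action, so the two corresponding $\tau_i$ carry the same exponent.

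The main obstacle, and the step that needs the most care, is the bookkeeping that makes the two bijections $\{\sigma_i\} \leftrightarrow$ (embeddings of $\overline F$) and $\{\p_i\} \leftrightarrow$ (embeddings of $\overline F$) compatible, i.e. that a single choice of coset representatives $\tau_i$ for $\mathrm{Gal}(\overline F/\Q)/\mathrm{Gal}(\overline F/F)$ simultaneously enumerates both sets in a way that matches up complex-conjugate pairs on the archimedean side with the corresponding pairs on the $p$-adic side. This requires knowing that complex conjugation and the relevant decomposition groups interact correctly — concretely, that for the fixed $\sigma$ and $\p$, the stabilizer cosets are indexed by the same set $\mathrm{Gal}(\overline F/\Q)/\mathrm{Gal}(\overline F/F)$, which is automatic, and that the notion of "complex conjugate embedding of $F$" pulls back consistently. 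Once the indexing is set up, the rest is the routine check that $\mathrm{Sym}$ and $\otimes$ commute with extension of scalars, which I would not spell out in detail.
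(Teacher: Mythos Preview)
Your proposal is correct and follows essentially the same route as the paper: fix $\sigma$ and $\p$ on $\overline F$, use a single system of coset representatives $\tau_i$ for $\mathrm{Gal}(\overline F/\Q)/\mathrm{Gal}(\overline F/F)$ to simultaneously enumerate the complex and $p$-adic embeddings of $F$, set $\rho=\bigotimes_i \Sym^{d_i}\circ\tau_i$, and check the two base changes factor by factor. The paper's write-up is terser, but the construction and the verification are the same; your extra remark that $p$ splitting completely in $F$ forces it to split completely in $\overline F$ (so that $\overline F_\p=\Q_p$) is a point the paper leaves implicit.
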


\begin{proof}

Let $G = \text{Gal}(\overline{F}/\Q)$, and $H = \text{Stab}_G(F)$.  If $\sigma = \sigma_0$ is a chosen complex embedding of $\overline{F}$ and $g_i \in G / H$ is a fixed system of coset representatives, the set of all complex embeddings of $F$ is equal to the restrictions of $\sigma_i = \sigma \circ g_i$.  Likewise, if we choose a $p$-adic place $\p$ of $\overline{F}$, the restrictions of $\p_i = \p \circ g_i$ form a complete set of $p$-adic embeddings of $F$.

Let $\Sym^d$ denote the $d$th symmetric power representation of $SL_2(\overline{F})$.  If we define the representation $\rho$ of $SL_2(\overline{F})$ by

\begin{equation*}
\rho \simeq \bigotimes_i \Sym^{d_i} \circ g_i,
\end{equation*}
then on restriction to $SL_2(F)$ we have

\begin{eqnarray*}
\rho \otimes_{\sigma} \C & \simeq & \bigotimes_i ( \Sym^{d_i} \circ g_i ) \otimes_{\sigma} \C \\
& \simeq & \bigotimes_i \Sym^{d_i} \otimes_{\sigma_i} \C \\
& \simeq & W_{\bf d},
\end{eqnarray*}
and likewise for $V_{\bf d}$.  The lemma follows on restriction to $\Gamma$.

\end{proof}

We now introduce the $p$-adic tools which we shall use to study the RHS of (\ref{coeffhom}).  Let $\PP \subset \{ \p_1, \ldots, \p_n \}$ be the set of places at which we are allowing the weight to vary, so that $\PP$ is the complement of $I$ under the bijection of lemma \ref{coeff}.  Let

\begin{equation*}
\GG_\PP = \prod_{ \p_i \in \PP} G_i,
\end{equation*}
and factorise $V_{\bf d}$ as $V_{ {\bf d}, \PP} \otimes V_{\bf d}^\PP$.  We shall choose a $\GG$-stable lattice $\VV_{\bf d} \subset V_{\bf d}$ using lemma \ref{symlattice}, by letting ${\bf k}$ be the smallest $t$-tuple of integers $\ge 1$ satisfying $p^{k_i-1} > d_i$ and $4 | k_i - 1$, and choosing $\VV_{d_i} \subset V_{d_i}$ such that $\VV_{d_i} / p \subset \F_p[G_i / H_i(p^{k_i})]$ for all $i$.  We let $\VV_{\bf d} = \otimes \VV_{d_i}$, which we factorise as $\VV_{ {\bf d},\PP} \otimes V_{\bf d}^\PP$.  Let

\begin{equation*}
\GG_{\PP,r} = \prod_{ \p_i \in \PP} G_i(p^r)
\end{equation*}
be the principal congruence subgroups of $\GG_\PP$, $\Gamma_r = \Gamma \cap \GG_{\PP,r}$, and $Y_r$ be the corresponding covers of $Y$.  Following Emerton, we define

\begin{equation*}
\widetilde{H}_i(\VV_{\bf d}) = \underset{ \substack{\longleftarrow \\ s} }{\lim} \, \underset{ \substack{\longleftarrow \\ r} }{\lim} \; H_i( Y_r, \VV_{\bf d} / p^s )
\end{equation*}
to be the $i$th completed homology module of the tower $\{ Y_r \}$ with coefficients in $\VV_{\bf d}$.  We shall use the following fact about these modules, taken from \cite{CE1, E}.

\begin{enumerate}

\item $\widetilde{H}_i(\VV_{\bf d})$ is a $p$-adically complete and separated $\Z_p$ module.

\item $\widetilde{H}_i(\VV_{\bf d})$ has the structure of a finitely generated $\Z_p \llbracket \GG_\PP \rrbracket$ module which extends the natural action of $\GG_\PP$ by conjugation.

\item Because $SL(2,\C)$ does not admit discrete series, $\widetilde{H}_i(\VV_{\bf d})$ is a torsion $\Z_p \llbracket \GG_\PP \rrbracket$ module for all $i$.

\item $\widetilde{H}_i(\VV_{\bf d})$ carries a natural action of $SL(2,F_{\p_i})$ for those $\p_i \in \PP$ satisfying $d_i = 0$, which extends the action of $G_i$.

\item There is a spectral sequence

\begin{equation}
\label{specseq}
E^{i,j}_2 = H_i ( \GG_\PP, \widetilde{H}_j(\VV_{\bf d}) \otimes_{\Z_p} \Q_p ) \Longrightarrow H_{i+j}( Y, V_{\bf d}).
\end{equation}

\end{enumerate}

The spectral sequence (\ref{specseq}) implies an upper bound

\begin{equation}
\label{specbound}
\dim H_q( Y, V_{\bf d}) \le \sum_{i+j = q} \dim H_i ( \GG_\PP, \widetilde{H}_j(\VV_{\bf d}) \otimes_{\Z_p} \Q_p )
\end{equation}
for the classical homology group we are interested in, and after some simplifications this will reduce the problem of a power saving to a statement about torsion $\Z_p \llbracket \GG_\PP \rrbracket$ modules with a compatible $SL_2$ action which will follow from the results of sections \ref{coinvar} and \ref{cosetrep}.  Because we have defined the representation $\VV_{\bf d}$ by pulling back a representation of $\GG$, $\VV_{{\bf d},\PP} / p^s$ is eventually trivial as a representation of $\Gamma_r$.  We therefore have

\begin{eqnarray*}
\widetilde{H}_j(\VV_{\bf d}) & \simeq & \underset{ \substack{\longleftarrow \\ s} }{\lim} \, \underset{ \substack{\longleftarrow \\ r} }{\lim} \; H_i( Y_r, \VV_{\bf d}^\PP / p^s ) \otimes \VV_{{\bf d},\PP} / p^s \\
& \simeq & \widetilde{H}_j(\VV_{\bf d}^\PP) \otimes \VV_{{\bf d},\PP}
\end{eqnarray*}
as representations of $\GG_\PP$, and because we are fixing $d_i$ for those $\p_i \not\in \PP$ we shall simply write $\widetilde{H}_j$ for $\widetilde{H}_j(\VV_{\bf d}^\PP)$ and $\widetilde{H}_{j, \Q_p}$ for $\widetilde{H}_j(\VV_{\bf d}^\PP) \otimes_{\Z_p} \Q_p$.  Furthermore, as we do not need to give any further consideration to the primes not in $\PP$ we shall ignore them from this point on and write $\GG$ for $\GG_\PP$, $V_{\bf d}$ for $\VV_{{\bf d},\PP}$, and assume that $\PP = \{ \p_1, \ldots, \p_t \}$.  The upper bound (\ref{specbound}) may then be rewritten

\begin{equation}
\label{specbound2}
\dim H_q( Y, V_{\bf d}) \le \sum_{i+j = q} \dim H_i ( \GG, \widetilde{H}_{j, \Q_p} \otimes V_{\bf d} ).
\end{equation}

\section{Reduction modulo $p$}
\label{modp}

We now combine the results of the previous sections to prove an upper bound for the RHS of (\ref{specbound2}), by choosing a lattice inside $\widetilde{H}_{j,\Q_p} \otimes V_{\bf d}$ which we then reduce modulo $p$.  The lattice we take will be the tensor product of the image of $\widetilde{H}_j$ in $\widetilde{H}_{j,\Q_p}$, and the lattice $\VV_{\bf d} \subset V_{\bf d}$ constructed above using lemma \ref{symlattice}.  We know that $\VV_{\bf d} / p$ is a submodule of $\F_p[\GG / \Hh_{\bf k}]$ which we denote by $L$.

The image of $\widetilde{H}_j$ in $\widetilde{H}_{j,\Q_p}$ is isomorphic to the $p$-torsion free quotient $\widetilde{H}_{j, \text{tf}}$ of $\widetilde{H}_j$, and we denote the reduced lattice $\widetilde{H}_{j, \text{tf}} / p$ by $M_j$.  By property (2), $\widetilde{H}_{j,\Q_p}$ is a torsion $\Lambda_{\Q_p}$ module which implies that $M_j$ is a torsion $\Lambda$ module.  Moreover, $\widetilde{H}_{j, \text{tf}}$ is invariant under $SL(2,F_{\p_i})$ for any $\p_i$, so that $M_j$ also carries an action of these groups.  We may now reduce our chosen lattice modulo $p$ to obtain

\begin{equation*}
\dim_{\Q_p} H_i( \GG, \widetilde{H}_{j,\Q_p} \otimes V_{\bf d}) \le \dim_{\F_p} H_i( \GG, M_j \otimes L),
\end{equation*}
and the required bound on the RHS is obtained by combining the results of sections \ref{coinvar} and \ref{cosetrep} in the following lemma.

\begin{lemma}
\label{homsave1}
Let $M$ be a torsion $\Lambda$ module with a compatible action of $SL_2$, and $L$ any subrepresentation of $\F_p[\GG / \Hh_{\bf k}]$ which factorises as $\otimes L_i$ with $L_i \subset \F_p[ G_i / H_i(p^k) ]$.  We then have

\begin{equation}
\label{homsave}
\dim H_i( \GG, M \otimes L) \ll \alpha^\kappa | \GG : \Hh_{\bf k} |
\end{equation}
for all $i$, where $\alpha = \eta^{1/2}$ and the implied constant depends only on $M$.
\end{lemma}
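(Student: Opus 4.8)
The plan is to reduce \eqref{homsave} to the coinvariant bounds of Section \ref{coinvar} by first using the corollary to Proposition \ref{quotients} to filter $L$, then applying Shapiro's lemma, and finally invoking Proposition \ref{induced} after conjugating $\Hh_{\bf k}$ to $\TT_{\bf k}$. First, since homology is additive in short exact sequences, a filtration $0 = L_0 \subset \cdots \subset L_m = L$ with $L_a/L_{a-1} \simeq \F_p[\GG/\Hh_{{\bf k}(a)}]$ reduces the problem to bounding $\dim H_i(\GG, M \otimes \F_p[\GG/\Hh_{\bf k}])$ for each ${\bf k}$ occurring. Here the corollary gives us such a filtration where the levels ${\bf k}(a)$ are controlled by the base $p^4$ digits of $\dim L_i$; because $L$ factorises as $\otimes L_i$ with each $L_i \subset \F_p[G_i/H_i(p^k)]$ of dimension at most $\dim V_{d_i} \ll d_i \ll p^{k}$, the relevant levels satisfy $p^{k(a)_i - 1} \ll \dim L_i$, so $|\GG : \Hh_{{\bf k}(a)}| \ll |\GG : \Hh_{\bf k}|$; moreover each $k(a)_i \ge \kappa/C$ for a suitable constant, so the savings $\eta^{\kappa(a)}$ coming later will dominate $\eta^{\kappa/C}$, which after replacing $\eta$ by $\alpha = \eta^{1/2}$ and absorbing constants gives the claimed $\alpha^\kappa$.

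Next, Shapiro's lemma identifies $H_i(\GG, M \otimes \F_p[\GG/\Hh_{\bf k}])$ with $H_i(\Hh_{\bf k}, M)$, using that $\F_p[\GG/\Hh_{\bf k}] \simeq \F_p[\GG] \otimes_{\F_p[\Hh_{\bf k}]} \F_p$ and that $M$ is a $\Lambda$-module, hence a $\GG$-module. This transfers the problem to bounding $\dim H_i(\Hh_{\bf k}, M)$ for $M$ a fixed torsion $\Lambda$-module. Then I would use the compatible $SL_2$-action on $M$ to replace the subgroup $\Hh_{\bf k} = \prod H_i(p^{k_i})$ by the diagonal-type subgroup $\TT_{\bf k} = \prod T_i(p^{k_i})$: within each $SL_2(F_{\p_i})$ the groups $H_i(p^{k_i})$ and $T_i(p^{k_i})$ are conjugate (by a suitable element of the noncompact group, e.g. an antidiagonal or Weyl-type element swapping upper and lower unipotent conditions), and conjugating the module by this element of $SL_2(F_{\p_i})$ — which acts on $M$ by hypothesis — gives $H_i(\Hh_{\bf k}, M) \simeq H_i(\TT_{\bf k}, M')$ for an isomorphic module $M'$, still a fixed torsion $\Lambda$-module. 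Finally, Proposition \ref{induced} with $r = 0$ yields $\dim H_i(\TT_{\bf k}, M') \ll \eta^\kappa |\GG : \TT_{\bf k}|$, and since $|\GG : \TT_{\bf k}| \ll |\GG : \Hh_{\bf k}| \cdot p^{\sum k_i}$...

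\emph{Correction to the index comparison:} in fact $|\GG : \Hh_{\bf k}|$ and $|\GG : \TT_{\bf k}|$ differ by the factor $\prod |H_i(p^{k_i}) : T_i(p^{k_i})| = \prod p^{k_i - 1}$, so $|\GG : \TT_{\bf k}| = |\GG : \Hh_{\bf k}| \cdot \prod p^{k_i-1}$; since each $p^{k_i - 1} \ll d_i$ this is absorbed by writing $\eta^\kappa \prod p^{k_i-1} \le (\eta p)^\kappa \cdot (\text{bounded in the fixed directions})$ and noting $\eta p < p$... this requires care. The cleaner route is: we only ever apply this after the filtration step, where the levels ${\bf k}(a)$ are bounded by $\log_p \dim L_i$, so $\prod p^{k(a)_i - 1} \ll \dim L = \dim V_{\bf d}$, and $\dim V_{\bf d} \cdot |\GG : \Hh_{\bf k}|$ is exactly the quantity appearing on the right of \eqref{homsave} up to the $\alpha^\kappa$ saving. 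I expect the main obstacle to be precisely this bookkeeping: verifying that the composite of "pass to a sub-filtration piece, Shapiro, conjugate $\Hh \to \TT$, apply Proposition \ref{induced}" produces a saving of the form $\alpha^\kappa |\GG : \Hh_{\bf k}|$ rather than merely $\alpha^\kappa |\GG : \TT_{\bf k}|$, and that the $\eta^{1/2}$ (rather than $\eta$) in the statement leaves exactly enough room to absorb the index discrepancies $\prod p^{k(a)_i-1}$ and the constants from the base-$p^4$ expansion. The $SL_2$-equivariance needed to move from $\Hh_{\bf k}$ to $\TT_{\bf k}$ is the conceptual heart, but it is a clean conjugation once one checks the relevant Weyl element lies in $SL_2(F_{\p_i})$ and normalises the relevant structure; the quantitative matching of exponents is the part that demands the most attention.
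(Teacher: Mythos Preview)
Your overall architecture (filter $L$, apply Shapiro, conjugate to a $\TT$-type subgroup, invoke Proposition~\ref{induced}) matches the paper's, but the conjugation step is wrong, and this is the genuine gap. You assert that $H_i(p^{k_i})$ and $T_i(p^{k_i})$ are conjugate in $SL_2(F_{\p_i})$. They are not: inside $G_i$ one has $|G_i:H_i(p^{k_i})|=p^{k_i-1}$ while $|G_i:T_i(p^{k_i})|=p^{2(k_i-1)}$, so no element of the ambient group can conjugate one to the other. This is exactly the index discrepancy you run into and never resolve; the attempted inequality ``$\eta p<p$'' would require $\eta<1$\dots which is true, but then $\eta^\kappa\prod p^{k_i-1}$ still exceeds $|\GG:\Hh_{\bf k}|$ and you have lost the saving entirely.

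What actually works is to conjugate $H_i(p^{k_i})$ to $T_i(p^{k_i'})$ with $k_i'=(k_i+1)/2$, using the diagonal element $\mathrm{diag}(p^m,p^{-m})\in SL_2(F_{\p_i})$ with $m=(k_i-1)/4$ (this is precisely why the divisibility condition $4\mid k_i-1$ is imposed). At the halved level the indices match exactly, $|\GG:\TT_{{\bf k}'}|=|\GG:\Hh_{\bf k}|$, and Proposition~\ref{induced} yields the saving $\eta^{\kappa'}\approx\eta^{\kappa/2}=\alpha^\kappa$; this is the source of the exponent $\alpha=\eta^{1/2}$, which in your write-up remains unexplained. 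A secondary issue: your claim that each level in the filtration satisfies $k(a)_i\ge\kappa/C$ is false --- the base-$p^4$ expansion can have nonzero low digits, so pieces at level $1$ occur. The paper handles this not by a lower bound on the levels but by summing $\sum_{{\bf l}\le{\bf k}}\alpha^{\min l_i}|\GG:\Hh_{\bf l}|$ directly; since $\alpha p>1$ the geometric sum in each coordinate is dominated by its top term, giving $\ll\alpha^\kappa|\GG:\Hh_{\bf k}|$.
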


\begin{proof}

In the case $L = \F_p[\GG / \Hh_{\bf k}]$, this follows easily from Shapiro's lemma and lemma \ref{induced} once we use the $SL_2$ action to conjugate $\Hh_{\bf k}$ to $\TT_{{\bf k}'}$, where $k_i' = (k_i + 1)/2$ (this is where our divisibility assumption is used).  For general $L$, we apply lemma \ref{quotients} to the factors $L_i$ to obtain a filtration $L = F_0 \supset F_1 \supset \ldots$ such that every quotient $F_i / F_{i+1}$ is isomorphic to $\F_p[\GG / \Hh_{\bf l}]$ for some ${\bf l} \le {\bf k}$ with $4 | l_i - 1$, and each isomorphism class of quotient occurs at most $p^{4t}$ times.  By applying (\ref{homsave}) in the case $L = \F_p[\GG / \Hh_{\bf l}]$ to the quotient modules, we obtain

\begin{eqnarray*}
\dim H_i( \GG, M \otimes L) & \le & p^{4t} \sum_{ \substack{ l \le k \\ l \equiv 1 \: (4) } } H_i( \Hh_{\bf l}, M) \\
& \ll & \sum_{l \le k} \alpha^{\min(l_i)} | \GG : \Hh_{\bf l} | \\
& \ll & \sum_{i = 1}^t \sum_{l \le k} \alpha^{l_i} | \GG : \Hh_{\bf l} | \\
& \ll & \sum_{i = 1}^t \alpha^{k_i} | \GG : \Hh_{\bf k} | \\
& \ll & \alpha^{\kappa} | \GG : \Hh_{\bf k} |.
\end{eqnarray*}

\end{proof}

It remains to follow the bound we have proven back to one for the original cohomology group $H_i(Y, W_{\bf d})$.  Lemma \ref{homsave1} gives

\begin{equation*}
\dim H_i( Y, V_{\bf d}) \ll \alpha^\kappa | \GG : \Hh_{\bf k} |,
\end{equation*}
where ${\bf k}$ is a $|\PP|$-tuple of integers satisfying $| k_i - \ln_p d_i | \le 4$ for $i \in \PP$.  We therefore have

\begin{eqnarray*}
\dim H_i( Y, V_{\bf d}) & \ll & \alpha^\kappa \prod p^{k_i} \\
& \ll & ( \underset{i \in \PP}{\min}\: d_i )^{\ln \alpha / \ln p} \prod_{i \in \PP} d_i.
\end{eqnarray*}
When expressed in terms of the original notation for $W_{\bf d}$ with ${\bf d} \in ( \Z_{\ge 0} )^{r_1 + r_2}$, we see that this is equivalent to proposition \ref{main2}, and hence theorem \ref{main}.


\begin{thebibliography}{99}
\label{bibliography}

\bibitem{B1}
A. Borel: \textit{Stable real cohomology of arithmetic groups}, Ann. Sci. Ec. Norm. Super. (4) 7 (1974), 235-272 

\bibitem{B2}
A. Borel: \textit{Stable real cohomology of arithmetic groups II}, Prog. Math. Boston 14:21-55 (1981)

\bibitem{CE1}
F. Calegari, M. Emerton: \textit{Bounds for Multiplicities of Unitary Representations of Cohomological Type in Spaces of Cusp Forms}, Ann. of Math. 170 (2009), 1437-1446.

\bibitem{CE2}
F. Calegari, M. Emerton: \textit{Mod-p Cohomology Growth in p-adic Analytic Towers of 3-Manifolds}, preprint.

\bibitem{D} 
W. Duke: \textit{ The Dimension of the Space of Cusp Forms of Weight One}, IMRN 2 (1995), 99-109.

\bibitem{DSMS}
J.D. Dixon, M.P.F. du Sautoy, A. Mann, D. Segal: \textit{Analytic Pro-p Groups}, Cambridge Studies in Advanced Mathematics 61, Cambridge University Press, Cambridge (1999).

\bibitem{E}
M. Emerton: \textit{On the ineterpolation of systems of Hecke eigenvalues}, Inventiones Math. 164 (2006), 1-84.

\bibitem{FGT}
T. Finis, F. Grunewald, P. Tirao: \textit{The cohomology of lattices in $SL(2,\C)$}, Experimental Math. 19 (2010), no.1, 29-63.

\bibitem{Hr}
Harder, G: \textit{Eisenstein cohomology of arithmetic groups: the case $GL_2$}, Invent. Math. 89 (1987), 37-118.

\bibitem{H}
Harris, M: \textit{Correction to: �p-adic representations arising from descent on abelian varieties� [Compositio
Math. 39 (1979), no. 2, 177�245]}, Compositio Math. 121 (2000), no. 1, 105-108.

\bibitem{MV}
P. Michel, A. Venkatesh: \textit{Dimension of the space of cusp forms associated to 2-dimensional Galois representations}, IMRN 38 (2002), 2021-2027.

\bibitem{Ra}
C.S. Rajan: \textit{On the non-vanishing of the first Betti number of hyperbolic three manifolds}, Math. Annalen 330, 323-329 (2004).

\bibitem{Sh} H. Shimizu: \textit{On discontinuous groups acting on the product of the upper half planes}, Ann. of Math. 77 (1963) no. 1, 33-71.

\end{thebibliography}
\end{document}